\newtheorem{thm}{Theorem}[section]
\newtheorem{prop}[thm]{Proposition}
\newtheorem{cor}[thm]{Corollary}
\newtheorem{lem}[thm]{Lemma}
\newtheorem*{theorem*}{Theorem}
\theoremstyle{remark}
\newtheorem{rem}[thm]{Remark}
\newtheorem{defn}[thm]{Definition}
\newtheorem{example}[thm]{Example}
\numberwithin{equation}{section}
\newcommand{\Z}{\mathbb{Z}}
\newcommand{\N}{\mathbb{N}}
\newcommand{\La}{\Lambda}
\newcommand{\la}{\lambda}
\newcommand{\kp}{\mathrm{KP}_R(\Lambda)}
\newcommand{\Lamin}{\La^{\mathrm{min}}}
\newcommand{\FE}{\mathrm{FE}(\Lambda)}
\begin{document}
\title[Purely infinite simple Kumjian-Pask algebras]{Purely infinite simple Kumjian-Pask algebras}

\author{Hossein Larki}

\address{Department of Mathematics\\ Faculty of Mathematical Sciences and Computer\\ Shahid Chamran University of Ahvaz, Iran}
\email{h.larki@scu.ac.ir}


\date{\today}

\subjclass[2010]{16W50}

\keywords{Kumjian-Pask algebra, higher-rank graph, simplicity, pure infiniteness}

\begin{abstract}
Given any finitely aligned higher-rank graph $\Lambda$ and any unital commutative ring $R$, the Kumjian-Pask algebra $\mathrm{KP}_R(\Lambda)$ is known as the higher-rank generalization of Leavitt path algebras. After characterizing simple Kumjian-Pask algebras by L.O. Clark and Y.E.P. Pangalela (and others), we focus  in this article on the purely infinite simple ones. Briefly, we show that if $\mathrm{KP}_R(\Lambda)$ is simple and every vertex of $\Lambda$ is reached from a generalized cycle with an entrance, then $\mathrm{KP}_R(\Lambda)$ is purely infinite. We next prove a standard dichotomy for simple Kumjian-Pask algebras: in the case that each vertex of $\Lambda$ is reached only from finitely many vertices and $\kp$ is simple, then $\kp$ is either purely infinite or locally matritial. This result covers all unital simple Kumjian-Pask algebras.
\end{abstract}

\maketitle


\section{Introduction}

Motivated from the work of Roberston and Steger in \cite{rob99}, Kumjian and Pask introduced the notion of a higher-rank graph as analogous of directed graphs and an associated $C^*$-algebra as higher-rank generalization of graph $C^*$-algebras. Directed graphs and their generalizations provide a framework to analysis the associated $C^*$-algebras so that many specific relations and properties of a $C^*$-algebra may be expressed by simple and visible features of the underlying graph. So, the basic problem in the investigation of graph $C^*$-algebras and their generalizations is ``how can realize an structural property of the $C^*$-algebra by observing the underlying graph".

Despite some similarities in definitions, the structure of higher-rank graphs and their $C^*$-algebras are more complicated than that of ordinary directed graphs. However, there has been a great deal of attention to the structure of higher-rank graph $C^*$-algebras (see \cite{rae03,rae04,far05,sim06,rob07,rob09,sho12,lew10,pas06,eva12} for example) because they contain interesting kinds of $C^*$-algebras besides the classical graph $C^*$-algebras such as tensor products of graph $C^*$-algebras \cite{kum00} and many simple A$\mathbb{T}$-algebras with real rank zero \cite{pas06} among others.

In \cite{kum00}, Kumjian and Pask only considered row-finite $k$-graphs with no sources. After that, Raeburn, Sims, and Yeend developed the Kumjian-Pask's construction by associating a $C^*$-algebra to a locally convex row-finite $k$-graph with possible sources \cite{rae03} and to a finitely aligned $k$-graph \cite{rae04} as the most general higher-rank graph $C^*$-algebras.

Let $R$ be a unital commutative ring. Associated to every finitely aligned higher-rank graph (or $k$-graph) $\La$, the Kumjian-Pask algebra $\kp$ is a specific universal $R$-algebra as the algebraic version of higher-rank graph $C^*$-algebras. They were first introduced in \cite{pin13} for row-finite $k$-graphs without sources, and then extended to locally convex row-finite and finitely aligned $k$-graphs \cite{cla14,cla16}. Note that the class of Kumjian-Pask algebras includes all Leavitt path algebras by identifying directed graphs as 1-graphs. However, there are Kumjian-Pask algebras which do not belong to the class of Leavitt path algebras (see \cite[Section 7]{pin13}).

The concept of pure infiniteness was introduced in \cite{ara02} to classify simple rings as an algebraic analogue of that for $C^*$-algebras \cite{cun81}. It was then generalized in \cite{pin10} for possibly non-simple setting. It is well-known from \cite{abr06,lar13} that a simple Leavitt path algebra $L_R(E)$ associated to a directed graph $E$ is purely infinite if and only if every vertex of $E$ is reached from a cycle (the direction of graphs in \cite{abr06,lar13} was considered as contrariwise of ours). So, one may want to have a higher-rank version of this result for the Kumjian-Pask algebras. The aim of present article is to investigate the pure infiniteness of a simple Kumjian-Pask algebra $\kp$ when $\La$ is finitely aligned. We use the notion of \emph{generalized cycles} introduced by Evans in \cite{eva02} to give a (sufficient) condition under which a simple $\kp$ would be purely infinite. In particular, we can determine \emph{all} unital purely infinite simple Kumjian-Pask algebras. Furthermore, we discuss on the ideal structure of $\kp$ when $\La$ is cofinal and aperiodic.

The article is organized as follows. We begin by Section \ref{sec2} with a review of $k$-graphs and associated Kumjian-Pask algebras. In Sections \ref{sec3} and \ref{sec4}, we focus on the cofinality and aperiodicity of $k$-graphs as the fundamental properties for characterizing simple Kumjian-Pask algebras. In particular, we verify the ideal structure of $\kp$ when $\La$ is cofinal and aperiodic. Moreover, in Section \ref{sec4}, we gives some relations between the aperiodicity and generalized cycles of $\La$.

In Sections \ref{sec6} and \ref{sec7}, we investigate the pure infiniteness of simple Kumjian-Pask algebras. Theorem \ref{thm6.4} gives some conditions for $\La$ and $R$ to insure $\kp$ is purely infinite simple. It is the higher-rank analogue of \cite[Theorem 11]{abr06} and \cite[Proposition 3.1]{lar13}. In Section \ref{sec7}, we consider $k$-graphs $\La$ with this property: for each vertex $v$ in $\La$, there are at most finitely many vertices connecting to $v$. In this case, we show a dichotomy for simple Kumjian-Pask algebras $\kp$: if $\La$ contains a cycle, then $\kp$ is purely infinite; otherwise, $\kp$ is locally matricial. Note that this result covers all unital simple Kumjian-pask algebras.


\section{Preliminaries}\label{sec2}

In this section, we review the basic facts about higher-rank graphs from \cite{kum00,rae03,rae04} and their Kumjian-Pask algebras from \cite{pin13,cla16}.

\subsection{Higher-rank graphs}

Let $\N$ be the set of non-negative integers. Fixed an integer $k\geq 1$, we regard $\N^k$ as a semigroup under pointwise addition and denote the identity $(0,\ldots,0)\in \N^k$ by $0$. We denote by $e_1,\ldots,e_k$ the generators of $\N^k$, where the $i^{\mathrm{th}}$ coordinate of $e_i$ is $1$ and the others are $0$. For $n\in \N^k$, we write $n=(n_1,\ldots,n_k)$ and use $\leq$ for the partial order on $\N^k$ given by $m\leq n$ if $m_i\leq n_i$ for all $i$. We also write $m\vee n$ for the coordinate-wise maximum and $m\wedge n$ for the coordinate-wise minimum.

Following \cite{kum00}, a \emph{higher-rank graph} or \emph{$k$-graph} $\La=(\La^0,\La,r,s)$ is a countable small category $\La$ equipped with a \emph{degree functor} $d:\La \rightarrow \N^k$ satisfying the \emph{unique factorisation property}: if $\la\in\La$ and $d(\la)=m+n$ for $m,n\in\N^k$, then there exist unique $\la_1,\la_2\in \La$ such that $d(\la_1)=m$, $d(\la_2)=n$ and $\la=\la_1 \la_2$. We usually denote $\la(0,m):=\la_1$ and $\la(m,d(\la)):=\la_2$.

Notice that we may view every 1-graph as a directed graph where the degree of each morphism is equal to its length. So, for convenience, we refer to the objects in $\La^0$ as vertices and think of each $\la\in\La$ as a path (of rank $k$) from $s(\la)$ to $r(\la)$. If $\la,\mu\in\La$, then the composition $\la\mu$ makes sense if and only if $r(\mu)=s(\la)$. Recall that we have $\La^0\subseteq \La$ where elements of $\La^0$ are the paths of $\La$ with degree $0$. For $n\in\N^k$, we write $\La^n$ for $d^{-1}(n)=\{\la\in\La:d(\la)=n\}$. Given $\la\in\La$ and $E\subseteq \La$ we define
\begin{align*}
\la E&:=\{\la\mu:\mu\in E, r(\mu)=s(\la)\} ~~\mathrm{and}\\
E\la&:=\{\mu\la:\mu\in E, s(\mu)=r(\la)\}.
\end{align*}
Moreover, if $H\subseteq \La^0$ and $E\subseteq \La$, we write
$$H E:=\{\mu\in E:r(\mu)\in H\} ~~ \mathrm{and} ~~ E H:=\{\mu\in E: s(\mu)\in H\}.$$

We say that $\La$ is \emph{row-finite} if $v\La^n$ is finite for every $v\in \La^0$ and $n\in \N^k$. We also say $\La$ to be \emph{locally convex} if for every $v\in \La^0$, $1\leq i\neq j\leq k$, and every $\la\in v\La^{e_i}$, $\mu\in v\La^{e_j}$, the sets $s(\la)\La^{e_j}$ and $s(\mu)\La^{e_i}$ are nonempty \cite[Definition 3.10]{rae03}.

Given $\mu,\nu\in \La$, a \emph{minimal common extension} for $\mu$ and $\nu$ is a path $\la$ such that
$$d(\la)=d(\mu)\vee d(\nu) ~~ \mathrm{and} ~~ \la=\mu\alpha=\nu\beta ~~\mathrm{for~ some~} \alpha,\beta\in\La.$$
We denote by $\mathrm{MCE}(\mu,\nu)$ the set of all minimal common extension of $\mu$ and $\nu$. We also denote
$$\La^{\mathrm{min}}(\mu,\nu):=\left\{(\alpha,\beta)\in\La\times \La: \mu\alpha=\nu\beta\in \mathrm{MCE}(\mu,\nu)\right\}$$
and if $\mu\in\La$, $E\subseteq \La$, then
$$\mathrm{Ext}(\mu;E):=\bigcup_{\nu\in E}\left\{\alpha:(\alpha,\beta)\in \La^{\mathrm{min}}(\mu,\nu) \mathrm{~for ~some~} \beta\in\La \right\}.$$

\begin{defn}[{\cite[Definition 2.2]{rae03}}]
A $k$-graph $\La$ is called \emph{finitely aligned} if $\La^{\mathrm{min}}(\mu,\nu)$ is finite (possibly empty) for all $\mu,\nu\in\La$.
\end{defn}

Throughout the article, all $k$-graphs will be assumed to be finitely aligned.

\subsection{Kumjian-Pask algebras}

Let $\La$ be a $k$-graph and $v\in \La^0$. A subset $E\subseteq v\La$ is called \emph{exhaustive} if for every $\la\in v\La$, there exists $\mu\in E$ such that $\La^{\mathrm{min}}(\mu,\nu)\neq \emptyset$. Let us denote $\mathrm{FE}(\La)$ the collection of all finite and exhaustive sets in $\La$, that is,
$$\mathrm{FE}(\La):=\bigcup_{v\in\La^0}\left\{E\subseteq v\La\setminus \{v\}: E \mathrm{~is~finite ~ and ~ exhaustive} \right\}.$$
It is shown in \cite[Proposision 3.11]{far05} that if $E\in v\mathrm{FE}(\La)$ and $\mu\in v\La$, then $\mathrm{Ext}(\mu;E)$ is also finite and exhaustive.

\begin{defn}
Let $\La$ be a finitely aligned $k$-graph and let $R$ be a commutative unital ring. A collection $\{S_\la,S_{\la^*}: \la\in \La\}$ in an $R$-algebra $\mathcal{A}$ is called a \emph{Kumjian-Pask $\La$-family} if it satisfies the following relations:
\begin{enumerate}[(KP1)]
  \item $S_v S_w=\delta_{v,w} S_v$ for all $v,w\in\La^0$.
  \item $S_\mu S_\nu=S_{\mu \nu}$ and $S_{\nu^*}S_{\mu^*}=S_{(\mu \nu)^*}$ for all $\mu,\nu\in\La$ with $s(\mu)=r(\nu)$.
  \item $S_{\mu^*} S_\nu=\sum_{(\alpha,\beta)\in \La^{\mathrm{min}}(\mu,\nu)} S_\alpha S_{\beta^*}$ for all $\mu,\nu\in \La$.
  \item $\prod_{\mu\in E}\left(S_v-S_\mu S_{\mu^*} \right)=0$ for all $E\in v \mathrm{FE}(\La)$.
\end{enumerate}
\end{defn}

It is shown in \cite[Theorem 3.7]{cla16} that there is a (unique up to isomorphism) universal $R$-algebra $\kp$ generated by a Kumjian-Pask $\La$-family $\{s_\la, s_{\la^*}:\la\in\La\}$. This means that if $\{S_\la, S_{\la^*}:\la\in\La\}$ is a Kumjian-Pask $\La$-family in an $R$-algebra $\mathcal{B}$, then there exists a homomorphism $\pi:\kp \rightarrow \mathcal{B}$ such that $\pi(s_\la)=S_\la$ and $\pi(s_{\la^*})=S_{\la^*}$. We use lower-case letters for generating Kumjian-Pask families. It is a consequence of relations (KP1)-(KP4) that
$$\kp=\mathrm{span}\{s_\mu s_{\nu^*}:\mu,\nu\in\La \mathrm{~and ~} s(\mu)=s(\nu)\}.$$
Note that, by \cite[Theorem 3.7(c)]{cla16}, every Kumjina-Pask algebra $\kp$ is a $\Z^k$-graded ring with the grading components
$$\kp_n=\mathrm{span}\left\{s_\mu s_{\nu^*}: s(\mu)=s(\nu) \mathrm{~and~} d(\mu)-d(\nu)=n \right\} \hspace{.5cm} (n\in\Z^k)$$
(see also \cite[Theorem 3.4]{pin13} and \cite[Theorem 3.7(b)]{cla14}).

\subsection{Boundary paths}

Let $\La$ be a finitely aligned $k$-graph. For $n\in \N^k$, we denote by $\La^{\leq n}$ the set of all paths $\la$ with $d(\la)\leq n$ which cannot be extended to paths $\la\mu$ with $d(\la)<d(\la\mu)\leq n$; that is,
$$\La^{\leq n}:=\left\{\la\in\La: d(\la)\leq n, ~ \mathrm{and} ~ d(\la)_i<n_i ~\Longrightarrow ~ s(\la)\La^{e_i}=\emptyset \right\}.$$

To define the boundary paths in $\La$, we consider the following special $k$-graphs.

\begin{example}[\cite{kum00}]
Given $m\in(\N\cup \{\infty\})^k$, let $\Omega_{k,m}$ be the category
$$\Omega_{k,m}:=\left\{(p,q)\in \N^k\times\N^k : p\leq q\leq m \right\}$$
with $r(p,q):=(p,p)$ and $s(p,q):=(q,q)$. Then $\Omega_{k,m}$ equipped with the degree functor $d(p,q)=q-p$ is a $k$-graph. We usually denote $\Omega_{k,m}$ by $\Omega_k$ whenever $m=(\infty,\ldots,\infty)$.
\end{example}

Corresponding to each $\la\in\La$, we can define a degree-preserving functor $x_\la:\Omega_{k,d(\la)} \rightarrow \La$ such that $x_\la(p,q)=\la(p,q)$ for all $p\leq q\leq d(\la)$. Then the range of $x_\la$ is equal to the set $\{\la(p,q): p\leq q\leq d(\la)\}$ of subpaths of $\la$ in $\La$. Conversely, for every $m\in\N^k$ and graph morphism $x:\Omega_{k,m}\rightarrow \La$, we have $x=x_{x(0,m)}$. So, there is a one-to-on correspondence between the graph morphisms $x:\Omega_{k,m}\rightarrow \La$ and the elements of $\La^m$.

With this idea in mind, we recall the boundary paths of $\La$. Following \cite[Definition 5.10]{far05}, a \emph{boundary path} in $\La$ is a graph morphism $x:\Omega_{k,m} \rightarrow \La$ such that for all $(p,p)\in\Omega_{k,m}^0$ and all $E\in x(p,p)\mathrm{FE}(\La)$, there exists $\mu\in E$ with $x(p,p+d(\mu))=\mu$. We denote $\partial \La$ the set of all boundary paths in $\La$. The range map of $\La$ can be extended naturally to $\partial \La$ via $r(x):=x(0,0)$. For any $x\in \partial \La$, we also define $d(x):=m\in (\N\cup \{\infty\})^k$ \emph{the degree of $x$}. If $\La$ is both locally convex and row-finite, then $\partial \La$ coincides with the set
$$\La^{\leq \infty}:=\left\{x:\Omega_{k,m}\rightarrow \La: p\leq d(x) \mathrm{~and~} p_i=d(x)_i \mathrm{~impliy ~} x(p,p)\La^{e_i}=\emptyset \right\}$$
introduced in \cite{rae03}. However, we have $\La^{\leq\infty}\subseteq \partial \La$ with possibly nonequal in general. Recall also from \cite[Lemma 5.13]{far05} that $v\partial\La\neq \emptyset$ for all $v\in \La$.

For every $x\in \partial\La$ and $n\leq d(x)$, \emph{the shift of $x$} is the boundary path $\sigma^n(x)\in\partial \La$ such that $d(\sigma^n(x))=d(x)-n$ and $\sigma^n(x)(p,q):=x(n+p,n+q)$ for $p\leq q\leq d(x)-n$. Notice that the factorisation property implies $x(0,n)\sigma^n(x)=x$.


\section{Cofinality}\label{sec3}

Cofinality and aperiodicity are two key properties to characterize simple graph algebras and their generalizations. In this section, we focus on the cofinal $k$-graphs and give some descriptions in Theorem \ref{thm3.8} for ideal structure of associated Kumjian-Pask algebras.

\begin{defn}[{\cite[Definition 8.4]{sim06}}]
Let $\La$ be a finitely aligned $k$-graph. We say that $\La$ is \emph{cofinal} if for every $v\in \La^0$ and $x\in\partial \La$, there exists $n\leq d(x)$ such that $v\La x(n) \neq \emptyset$.
\end{defn}

If $\La$ is a locally convex row-finite $k$-graph, using \cite[Theorem 5.1]{pin13} and \cite[Theorem 9.4]{cla14}, basic graded ideals of $\kp$ may be completely characterized by saturated hereditary subsets of $\La^0$. However, in the non-row-finite case it seems that the structure of basic graded ideals of $\kp$ is more complicated (see \cite{tom07,lar12} for Leavitt path algebras and \cite{sim06} for higher-rank graph $C^*$-algebras). In Theorem \ref{thm3.8} below, we see that the confinality of $\La$ is equivalent to having only trivial saturated hereditary sets.

\begin{defn}
Let $\La$ be a finitely aligned $k$-graph.
\begin{enumerate}
  \item A subset $H\subseteq \La^0$ is called to be \emph{hereditary} if $v\in H$ and $v\La w\neq\emptyset$, we then have $w\in H$.
  \item A subset $H\subseteq \La^0$ is called to be \emph{saturated} if $E\subseteq v\mathrm{FE}(\La)$ with $s(E)\subseteq H$, we then have $v\in H$.
\end{enumerate}
\end{defn}

For convenience, we write $v\leq w$ whenever $v\La w\neq \emptyset$; that means, there exists a path $\la\in \La$ from $w$ to $v$. Recall from \cite[Lemma 3.2]{sim06} that for any hereditary set $H\subseteq \La^0$, the smallest saturated hereditary subset of $\La^0$ containing $H$ is
$$\overline{H}:=H\cup \left\{v\in\La^0\setminus H: \mathrm{~there ~ exists~} E\in v\mathrm{FE}(\La) ~\mathrm{with} ~ s(E)\subseteq H\right\}.$$
Moreover, if $H$ is hereditary and saturated, the restricted category $\La\setminus \La H:=(\La^0\setminus H,\La\setminus \La H,r,s,d)$ is a $k$-graph \cite[Lemma 4.1]{sim06}.

\begin{lem}\label{lem3.3}
Let $\La$ be a finitely aligned $k$-graph and let $H$ be a saturated hereditary subset of $\La^0$. If $E\in v\FE$ and $v\notin H$, then $E':=E\setminus EH\in v\mathrm{FE}(\La\setminus \La H)$.
\end{lem}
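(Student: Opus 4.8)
The plan is to check the defining properties of $v\mathrm{FE}(\La\setminus\La H)$ one by one, the first two being routine and the third carrying all the weight. Finiteness of $E'$ is immediate from $E'\subseteq E$. For membership, note that each $\mu\in E'=E\setminus EH$ has $r(\mu)=v\notin H$ and $s(\mu)\notin H$; since $H$ is hereditary, every vertex on $\mu$ then lies outside $H$, so $\mu\in\La\setminus\La H$ and $E'\subseteq v(\La\setminus\La H)\setminus\{v\}$. Thus everything reduces to proving that $E'$ is exhaustive in $\La\setminus\La H$: given $\la\in v(\La\setminus\La H)$, I must find $\mu\in E'$ together with a minimal common extension of $\mu$ and $\la$ that stays inside $\La\setminus\La H$.

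The naive attempt is to apply exhaustiveness of $E$ directly, producing $\mu\in E$ and $(\alpha,\beta)\in\Lamin(\mu,\la)$ with $\tau=\mu\alpha=\la\beta$; this is precisely where the main obstacle lies. First, the $\mu$ so obtained may belong to $EH$, hence not to $E'$. Second, and more seriously, even when $s(\mu)\notin H$ the common extension can \emph{escape} into $H$: its source $s(\tau)=s(\alpha)=s(\beta)$ may lie in $H$ although $s(\mu)$ and $s(\la)$ do not, so that $\tau\notin\La\setminus\La H$ and the pair is worthless in the quotient. Controlling where the common extension lands is the crux of the argument.

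The resolution is to route exhaustiveness through the set $\mathrm{Ext}(\la;E)$ and then invoke saturation of $H$. By \cite[Proposition 3.11]{far05}, $F:=\mathrm{Ext}(\la;E)$ lies in $s(\la)\FE$; since $s(\la)\notin H$ and $H$ is saturated, we cannot have $s(F)\subseteq H$, so there exists $\alpha\in F$ with $s(\alpha)\notin H$. By the definition of $\mathrm{Ext}$, this $\alpha$ is accompanied by some $\mu\in E$ and some $\beta$ with $\tau:=\la\alpha=\mu\beta$ a minimal common extension of $\la$ and $\mu$, and now $s(\tau)=s(\alpha)\notin H$ by construction. Because $s(\tau)\notin H$ and $H$ is hereditary, every vertex of $\tau$ avoids $H$; in particular $s(\mu)=\tau(d(\mu),d(\mu))\notin H$, forcing $\mu\in E\setminus EH=E'$, while $\alpha$ and $\beta$ are subpaths of $\tau$ with source $s(\tau)\notin H$ and hence lie in $\La\setminus\La H$. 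Finally $d(\tau)=d(\mu)\vee d(\la)$, so the degree condition defining minimality is unaffected by passing to the subgraph, giving $(\beta,\alpha)\in(\La\setminus\La H)^{\min}(\mu,\la)\neq\emptyset$. This produces the required element of $E'$ and completes the verification that $E'\in v\mathrm{FE}(\La\setminus\La H)$.
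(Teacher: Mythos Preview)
Your proof is correct and follows essentially the same route as the paper's: both arguments pass to $\mathrm{Ext}(\la;E)$, use saturation of $H$ at $s(\la)$ to select an extension whose source avoids $H$, and then read off that the corresponding $\mu\in E$ actually lies in $E'$ with the common extension surviving in $\La\setminus\La H$. Your write-up is somewhat more explicit about the membership check $E'\subseteq v(\La\setminus\La H)\setminus\{v\}$ and about why all vertices of the common extension avoid $H$, but the underlying argument is identical.
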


\begin{proof}
Notice that, since $v\notin H$, we have $E'\neq \emptyset$ by the saturation property. So, it suffices to show that $\mathrm{Ext}_{\La\setminus \La H}(\la; E')\neq \emptyset$ for all $\la \in v(\La\setminus \La H)$. To do this, fix arbitrary $\la\in v(\La\setminus \La H)$. Lemma 2.3 of \cite{sim06(2)} says that the set $\mathrm{Ext}_\La (\la;E)\subseteq s(\la)\La$ is finite and exhaustive. Since $s(\la)\notin H$, the saturation property of $H$ gives some $\la'\in \mathrm{Ext}_\La(\la;E)$ such that $s(\la')\notin H$. Thus we may factorise $\la \la'=\nu\beta$ with $\nu\in E$, $\beta\in \La\setminus \La H$ and $d(\la \la')=d(\la)\vee d(\nu)$. As $s(\nu)\notin H$ and $\nu\in E'=E\setminus EH$, we conclude that $\la'\in \mathrm{Ext}_{\La\setminus \La H}(\la;E')$, as desired.
\end{proof}

\begin{lem}\label{lem3.4}
Let $\La$ be a finitely aligned $k$-graph and let $H$ be a saturated hereditary subset of $\La^0$. Suppose that $\{T_\la, T_{\la^*}:\la\in \La\setminus \La H\}$ is a Kumjian-Pask $(\La \setminus \La H)$-family in an $R$-algebra $\mathcal{A}$. If we set
$$S_\la:=\left\{
           \begin{array}{ll}
             T_\la  & \la\in \La\setminus \La H \\
             0 & \la\in \La H
           \end{array}
         \right.
~~ \mathrm{and} ~~~~
S_{\la^*}:=\left\{
           \begin{array}{ll}
             T_{\la^*}  & \la\in \La\setminus \La H \\
             0 & \la\in \La H,
           \end{array}
         \right.
$$
then $\{S_\la, S_{\la^*}: \la\in\La\}$ is a Kumjian-Pask $\La$-family in $\mathcal{A}$.
\end{lem}

\begin{proof}
For the family $\{S_\la, S_{\la^*}: \la\in\La\}$, (KP1) is trivial, whereas (KP2) is a straightforward implication of the hereditary property of $H$.

For (KP3), fix $\mu,\nu\in \La$ and consider the following two cases:

{\bf Case 1:} Either $\mu$ or $\nu$ belongs to $\La H$. So, assume $\mu\in \La H$. Then $S_\mu=S_{\mu^*}=0$ and for every $(\alpha,\beta)\in \La^{\mathrm{min}}(\mu,\nu)$ we have $\mu\alpha\in \La H$ by the hereditariness. Thus, $s(\alpha)\in H$, $S_\alpha=0$, and $S_\alpha S_{\beta^*}=0$, and we get
$$\sum_{(\alpha,\beta)\in \Lamin(\mu,\nu)}S_\alpha S_{\beta^*}=0=S_{\mu^*}S_\nu.$$
The case $\nu\in \La H$ is similar.

{\bf Case 2:} Both $\mu,\nu$ belong to $\La\setminus \La H$. Since $\{T_\la, T_{\la^*}:\la\in \La\setminus \La H\}$ is a Kumjian-Pask $(\La\setminus \La H)$-family, we have
$$T_{\mu^*} T_\nu=\sum_{(\alpha,\beta)\in (\La\setminus \La H)^{\mathrm{min}}(\mu,\nu)}T_\alpha T_{\beta^*}$$
by (KP3). Moreover, for every $(\alpha,\beta)\in \Lamin(\mu,\nu)\setminus (\La\setminus \La H)^{\mathrm{min}}(\mu,\nu)$, we have $\mu\alpha\in \mathrm{MCE}_\La(\mu,\nu)\setminus \mathrm{MCE}_{\La\setminus \La H}(\mu,\nu)$ which follows $\mu\alpha\in \La H$ and $\alpha\in \La H$. So $S_\alpha=0$ by definition. Thus, we may compute
\begin{align*}
\sum_{(\alpha,\beta)\in \Lamin(\mu,\nu)}S_\alpha S_{\beta^*}&=\sum_{(\alpha,\beta)\in (\La\setminus \La H)^{\mathrm{min}}(\mu,\nu)}S_\alpha S_{\beta^*}\\
&=\sum_{(\alpha,\beta)\in (\La\setminus \La H)^{\mathrm{min}}(\mu,\nu)}T_\alpha T_{\beta^*}\\
&=T_{\mu^*}T_\nu=S_{\mu^*}S_\nu,
\end{align*}
and (KP3) holds for $S_{\mu^*}S_\nu$.

For (KP4), fix $E\in \FE$ and write $v:=r(E)$. If $v\in H$, then $\lambda\in \La H$ for every $\la\in E$, and (KP4) trivially holds for $E$. So, suppose $v\notin H$. Lemma \ref{lem3.3} implies that $E':=E\setminus EH$ is a finite and exhaustive set in the $k$-graph $\La\setminus \La H$. We may use (KP4) for $T_\la$'s to conclude that
\begin{align*}
\prod_{\la\in E}(S_v-S_\la S_{\la^*})&=\left(\prod_{\la\in E'}(S_v-S_\la S_{\la^*})\right)\left(\prod_{\la\in E H}(S_v-S_\la S_{\la^*})\right)\\
&=\left(\prod_{\la\in E'}(T_v-T_\la T_{\la^*})\right)(T_v)=0.
\end{align*}
This completes the proof.
\end{proof}

Given a subset $H\subseteq \La^0$, we write $I_H$ for the (two-sided) ideal of $\kp$ generated by $\{s_v:v\in H\}$. Furthermore, for an ideal $I$ of $\kp$ and $r\in R$, we define $H_{I,r}:=\{v\in \La^0: rs_v\in I\}$. If $r=1_R$, $H_{I,r}$ will be denoted by $H_I$. The proof of next lemma is similar to that of \cite[Lemma 3.3]{sim06}.

\begin{lem}\label{lem3.5}
Let $\La$ be a finitely aligned $k$-graph and let $R$ be a unital commutative ring. If $I$ is an ideal of $\kp$, then $H_{I,r}$ is a hereditary and saturated subset of $\La^0$ for every $r\in R$.
\end{lem}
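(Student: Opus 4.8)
The plan is to verify the two defining properties of $H_{I,r}$ separately, each time reducing the required membership ($rs_w \in I$ for hereditariness, $rs_v \in I$ for saturation) to the hypothesis by means of the Kumjian-Pask relations together with the two-sided ideal property of $I$ and the centrality of $r$. Throughout I would use the standard identity $s_{\la^*}s_\la = s_{s(\la)}$, which follows from (KP3) applied to the pair $(\la,\la)$: any minimal common extension of $\la$ with itself has degree $d(\la)$, so $\Lamin(\la,\la) = \{(s(\la),s(\la))\}$ and the sum collapses to $s_{s(\la)}$.

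For hereditariness I would fix $v \in H_{I,r}$ and $w \in \La^0$ with $v\La w \neq \emptyset$, and choose a path $\la \in v\La w$, so that $r(\la) = v$ and $s(\la) = w$. Relation (KP2) gives $s_v s_\la = s_\la$, whence $s_{\la^*}(rs_v)s_\la = r\,s_{\la^*}s_v s_\la = r\,s_{\la^*}s_\la = r s_w$. Since $rs_v \in I$ and $I$ is a two-sided ideal, the left-hand side lies in $I$, so $rs_w \in I$ and $w \in H_{I,r}$, as required.

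For saturation I would fix $E \in v\FE$ with $s(E) \subseteq H_{I,r}$ and aim to show $rs_v \in I$. The preliminary observation is that for each $\mu \in E$ relation (KP2) gives $s_\mu = s_\mu s_{s(\mu)}$, so that $r\,s_\mu s_{\mu^*} = s_\mu (r s_{s(\mu)}) s_{\mu^*} \in I$ because $s(\mu) \in H_{I,r}$. I would then expand the (KP4) identity
\[
0 = \prod_{\mu \in E}\bigl(s_v - s_\mu s_{\mu^*}\bigr)
\]
into its $2^{|E|}$ monomials. The unique monomial that selects $s_v$ from every factor equals $s_v$ (since $s_v s_v = s_v$ and every $\mu \in E$ has range $v$), while each of the remaining monomials contains at least one factor $s_\mu s_{\mu^*}$ and so, after multiplying the identity by the central element $r$, can be written as $A(r\,s_\mu s_{\mu^*})B$ with $A,B \in \kp$ and therefore lies in $I$. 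Rearranging $0 = r s_v + (\text{sum of terms in } I)$ yields $rs_v \in I$, i.e. $v \in H_{I,r}$.

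The calculations are routine once the two identities $s_{\la^*}s_\la = s_{s(\la)}$ and $r\,s_\mu s_{\mu^*} \in I$ are available; the only step demanding care is the expansion of the (KP4) product in the saturation argument, where one must confirm that every monomial apart from the leading $s_v$ genuinely carries a factor $s_\mu s_{\mu^*}$ capable of absorbing $r$ into $I$, which is precisely where the two-sided ideal property is used. This mirrors the $C^*$-algebraic argument of \cite[Lemma 3.3]{sim06}.
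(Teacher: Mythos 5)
Your proof is correct and is essentially the argument the paper intends: the paper omits the proof entirely, deferring to the $C^*$-algebraic analogue \cite[Lemma~3.3]{sim06}, and your algebraic translation---conjugating $rs_v$ by $s_\la$ via $s_{\la^*}s_\la=s_{s(\la)}$ for hereditariness, and expanding the (KP4) product so that every non-leading monomial absorbs $r\,s_\mu s_{\mu^*}\in I$ for saturation---is exactly that standard argument. No gaps.
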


The following is \cite[Lemma 5.4]{pin13} for finitely aligned $k$-graphs.

\begin{lem}\label{lem3.6}
Let $H$ be a hereditary subset of $\La^0$. Then
\begin{equation}\label{equ3.1}
I_H=\mathrm{span}\{s_\mu s_{\nu^*}:\mu,\nu\in\La, ~ s(\mu)=s(\nu)\in H\},
\end{equation}
that is a graded ideal of $\kp$ and we have $H_{I_H}=\overline{H}$.
\end{lem}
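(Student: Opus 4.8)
The plan is to establish the spanning formula \eqref{equ3.1}, verify that the displayed span is a graded ideal, and then compute $H_{I_H} = \overline{H}$. I would begin by proving that the set on the right-hand side of \eqref{equ3.1} is contained in $I_H$, then the reverse inclusion. For the easy direction, note that whenever $s(\mu)=s(\nu)\in H$, we can write $s_\mu s_{\nu^*} = s_\mu s_{s(\mu)} s_{\nu^*}$, and since $s(\mu)\in H$ the element $s_{s(\mu)}$ is one of the generators of $I_H$; as $I_H$ is a two-sided ideal, $s_\mu s_{s(\mu)} s_{\nu^*}\in I_H$. Hence the span sits inside $I_H$.

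For the reverse inclusion, the key is to show that the displayed span is itself a two-sided ideal containing each $s_v$ with $v\in H$; since $I_H$ is the \emph{smallest} such ideal, this forces equality. Write $J$ for the right-hand side of \eqref{equ3.1}. Clearly $s_v = s_v s_{v^*}\in J$ for $v\in H$. To see $J$ is an ideal, I would use the fact from the preliminaries that $\kp = \mathrm{span}\{s_\alpha s_{\beta^*}:\alpha,\beta\in\La,\ s(\alpha)=s(\beta)\}$, so it suffices to check that $J$ is closed under left and right multiplication by a typical generator $s_\alpha s_{\beta^*}$. The essential computation reduces to expanding products such as $s_{\beta^*} s_\mu$ using (KP3), which rewrites them as a sum $\sum_{(\gamma,\eta)\in\Lamin(\beta,\mu)} s_\gamma s_{\eta^*}$; then one multiplies out and re-collapses using (KP2). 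The point to verify carefully is that hereditariness of $H$ preserves membership: if $s(\mu)=s(\nu)\in H$ and we form minimal common extensions, every resulting source vertex is reachable from a vertex in $H$ and hence lies in $H$. This is exactly where the hereditary hypothesis is used, and I expect this bookkeeping — tracking that all source vertices stay in $H$ through repeated (KP3) expansions — to be the main obstacle, though it is routine given the structure theory already developed.

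Gradedness follows immediately once \eqref{equ3.1} is established: each spanning element $s_\mu s_{\nu^*}$ is homogeneous of degree $d(\mu)-d(\nu)\in\Z^k$, so $I_H$ is spanned by homogeneous elements and is therefore a graded ideal of the $\Z^k$-graded ring $\kp$.

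Finally, for the identity $H_{I_H}=\overline{H}$, I would argue by double inclusion. The containment $\overline{H}\subseteq H_{I_H}$ follows because $H\subseteq H_{I_H}$ trivially (each $s_v$ with $v\in H$ lies in $I_H$), and $H_{I_H}$ is saturated and hereditary by Lemma \ref{lem3.5}; since $\overline{H}$ is the smallest saturated hereditary set containing $H$, we get $\overline{H}\subseteq H_{I_H}$. For the reverse inclusion $H_{I_H}\subseteq\overline{H}$, suppose $v\in\La^0$ satisfies $s_v\in I_H$. Using the explicit description \eqref{equ3.1}, one writes $s_v$ as a finite $R$-linear combination $\sum_i r_i s_{\mu_i} s_{\nu_i^*}$ with $s(\mu_i)=s(\nu_i)\in H$. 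Here I expect to pass to a quotient or apply a suitable homomorphism that detects $\overline{H}$: specifically, Lemma \ref{lem3.4} provides, for the saturated hereditary set $\overline{H}$, a Kumjian-Pask $\La$-family in which $S_\la$ vanishes precisely on $\La\overline{H}$, inducing a homomorphism $\kp\to\mathrm{KP}_R(\La\setminus\La\overline{H})$ that kills exactly $I_{\overline H}$. Applying this homomorphism to the equation $s_v=\sum_i r_i s_{\mu_i}s_{\nu_i^*}$, the right side maps to $0$ since each $s(\mu_i)\in H\subseteq\overline{H}$, so the image of $s_v$ is $0$; by faithfulness of the generators in the universal algebra on $\La\setminus\La\overline H$, this forces $v\in\overline{H}$. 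This completes the argument, mirroring the proof of \cite[Lemma 3.3]{sim06} as the author indicates.
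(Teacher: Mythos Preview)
Your proposal is correct and follows essentially the same route as the paper: the spanning formula is proved by showing the right-hand side is an ideal (via (KP3) and hereditariness of $H$) containing the generators, gradedness is immediate from homogeneity of the spanning elements, and $H_{I_H}=\overline{H}$ is established via Lemma \ref{lem3.5} for one inclusion and the quotient homomorphism to $\mathrm{KP}_R(\La\setminus\La\overline{H})$ constructed from Lemma \ref{lem3.4} for the other. The only cosmetic difference is that the paper argues the second inclusion by contrapositive ($v\notin\overline{H}\Rightarrow s_v\notin I_H$), whereas you phrase it directly, but the underlying mechanism is identical.
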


\begin{proof}
Denote the right-hand side of equation \ref{equ3.1} by $J$.  Since $H\subseteq H_{I_H}$, we have $s_\mu s_{\nu^*}=s_\mu s_{s(\mu)}s_{\nu^*}\in I_H$ for every $\mu,\nu\in \La$ with $s(\mu)=s(\nu)\in H$, and hence $J\subseteq I_H$.

For the reverse, since $J$ contains the generators $\{s_v:v\in H\}$, it suffices to show that $J$ is an ideal of $\kp$. For this, fix some $s_\mu s_{\nu^*}$ with $s(\mu)=s(\nu)\in {H}$ and some $s_\la s_{\sigma^*}\in \kp$. Using (KP3), we get
\begin{equation}\label{equ3.2}
s_\mu s_{\nu^*}s_\la s_{\sigma^*}=s_\mu (s_{\nu^*}s_\la) s_{\sigma^*}=\sum_{(\rho,\tau)\in\Lamin(\nu,\la)} s_{\mu\rho}s_{(\sigma\tau)^*}.
\end{equation}
Since $H$ is hereditary, the facts $r(\rho)=s(\mu)$ and $r(\tau)=s(\sigma)$ yield that $s(\rho),s(\tau)\in H$. Thus each nonzero summand in (\ref{equ3.2}) lies in $J$, so does $s_\mu s_{\nu^*} s_\la s_{\sigma^*}$. By a same argument, we have also $s_\la s_{\sigma^*}s_\mu s_{\nu^*}\in J$. Therefore, $J$ is an ideal of $\kp$ and equation \ref{equ3.1} follows. Note that $I_H$ is a graded ideal as generated by homogenous elements.

Now we show $H_{I_H}=\overline{H}$. Since $H\subseteq H_{I_H}$, Lemma \ref{lem3.5} gives $\overline{H}\subseteq H_{I_H}$. To see $H_{I_H}\subseteq\overline{H}$, we show that $v\notin \overline{H}$ implies $s_v\notin I_H$ for every $v\in \La^0$. To do this, consider the $k$-graph $\La\setminus \La \overline{H}$ and let $\mathrm{KP}(\La\setminus \La \overline{H})$ generated by a Kumjian-Pask $\La$-family $\{t_\la,t_{\la^*}:\la\in\La \overline{H}\}$. Using Lemma \ref{lem3.4}, we may extended this family to a Kumjian-Pask $\La$-family $\{T_\la,T_{\la^*}:\la\in\La\}$. Then, by the universality, there is a homomorphism
$$\left\{
    \begin{array}{l}
      \phi:\kp\longrightarrow\mathrm{KP}(\La\setminus \La \overline{H}) \\
      \hspace{.85cm}r s_\mu s_{\nu^*}\longmapsto rT_\mu T_{\nu^*}  \hspace{1cm} (\mu,\nu\in\La ~ \mathrm{and} ~ s(\mu)=s(\nu)).
    \end{array}
  \right.
$$
Since $\phi(s_v)=0$ for $v\in H$, $\phi$ vanishes on $I_H$. On the other hand, for each $v\in \La^0\setminus \overline{H}$, we have $\phi(s_v)=t_v\neq 0$ and hence $s_v\notin \ker \phi\supseteq I_H$. Now the result follows.
\end{proof}

We will also need the next lemma to prove Theorem \ref{thm3.8}. To prove it, we use some terminology of \cite[Section 3]{rae04}. Fix a finite set $E\subseteq \La$. By \cite[Lemma 3.2]{rae04}, there exists a finite set $F\subseteq \La$ containing $E$ which satisfies
\begin{equation}\label{equ3.3}
\la,\mu,\rho,\tau\in F,~ d(\la)=d(\mu), ~ d(\rho)=d(\tau), ~s(\la)=s(\mu), ~\mathrm{and}~ s(\rho)=s(\tau)
\end{equation}
\[\mathrm{imply} ~ \left\{\la\alpha,\tau\beta:(\alpha,\beta)\in\La^{\mathrm{min}}(\mu,\rho)\right\}.\]
We define the finite sets
\[\Pi E:=\bigcap \left\{F\subseteq \La: E\subseteq F ~ \mathrm{and} ~ F ~ \mathrm{satisfies ~ (\ref{equ3.3})} \right\}\]
and
\[\Pi E\times_{d,s} \Pi E:=\left\{(\la,\mu)\in\Pi E\times \Pi E: d(\la)=d(\mu), s(\la)=s(\mu)\right\}.\]
Then similar to \cite[Lemma 3.2]{rae04}, we may show that
\[M_{\Pi E}^s:=\mathrm{span}_R\left\{ s_\la s_{\mu^*}: (\la,\mu)\in\Pi E\times_{d,s}\Pi E\right\}\]
is a finite-dimensional subalgebra of $\kp_0$. Recall also from \cite[Lemma 4.2]{cla16} that $M_{\Pi E}^s$ is spanned by $\{\Theta(s)_{\la,\mu}^{\Pi E}: (\la,\mu)\in \Pi E\times_{d,s}\Pi E\}$, where
\[\Theta(s)_{\la,\mu}^{\Pi E}:=s_\la \left(\prod_{\substack{\la\nu\in \Pi E\\ d(\nu)\neq 0}}\left(s_{s(\la)}-s_{\la\nu}s_{(\la\nu)^*}\right)\right)s_{\mu^*}.\]

\begin{lem}[{See \cite[Proposition 6.3]{pin13}}]\label{lem3.7}
Let $\La$ be a finitely aligned $k$-graph and $R$ a unital commutative ring. If $J$ is an ideal of $R$, then we have the following:
\begin{enumerate}
  \item If $rs_v\in J\kp$, then $r\in J$.
  \item If $I$ is an ideal of $\kp$ such that $J\kp\subseteq I$, then the action
$$(r+J)(x+I):=rx+I  \hspace{1cm} (\mathrm{for}~ r\in R,~ x\in \kp)$$
forms the quotient $\kp/I$ as an $R/J$-algebra.
  \item We have
$$\frac{\kp}{J\kp}\cong \mathrm{KP}_{\frac RJ}(\La)$$
 as $R/J$-algebras.
\end{enumerate}
\end{lem}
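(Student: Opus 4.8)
The plan is to prove the three parts in order, since each builds on the previous one.

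\medskip

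\textbf{Part (1).} The goal is to show that $r s_v \in J\kp$ forces $r \in J$. My approach is to exhibit a concrete $R/J$-algebra and a Kumjian-Pask $\La$-family in it that detects the scalar $r$. Consider the canonical quotient map $R \to R/J$ and use it to build a Kumjian-Pask $(\La)$-family $\{\bar s_\la, \bar s_{\la^*}\}$ in $\mathrm{KP}_{R/J}(\La)$; by universality of $\kp$ there is a homomorphism $q : \kp \to \mathrm{KP}_{R/J}(\La)$ sending $s_\mu s_{\nu^*} \mapsto \bar s_\mu \bar s_{\nu^*}$ and, crucially, scaling by $r \in R$ to scaling by $r + J$. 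Since the relations (KP1)--(KP4) are $R$-linear and $J\kp$ is spanned by elements $r' x$ with $r' \in J$, the map $q$ kills $J\kp$. Thus $r s_v \in J\kp$ gives $(r+J)\bar s_v = 0$ in $\mathrm{KP}_{R/J}(\La)$. Now I invoke the graded-uniqueness or the fact that the canonical family in $\mathrm{KP}_{R/J}(\La)$ is faithful on scalar multiples of vertices (i.e.\ $(r+J)\bar s_v = 0 \Rightarrow r + J = 0$), which is part of the construction in \cite{cla16,pin13}. Hence $r \in J$.

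\medskip

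\textbf{Part (2).} Here I must check that the proposed action of $R/J$ on $\kp/I$ is well defined and gives an $R/J$-algebra structure. Well-definedness in the first variable needs: if $r - r' \in J$, then $(r - r')x \in I$ for all $x \in \kp$; this holds because $(r-r')x \in J\kp \subseteq I$ by hypothesis. Well-definedness in the second variable is immediate since $I$ is an ideal. The remaining axioms (bilinearity, associativity of the scalar action with ring multiplication, and compatibility with the algebra product) are inherited from the $R$-algebra structure on $\kp$ and descend routinely to the quotient. This part is essentially a bookkeeping verification.

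\medskip

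\textbf{Part (3).} The isomorphism $\kp / J\kp \cong \mathrm{KP}_{R/J}(\La)$ is the heart of the statement. My strategy is to produce maps in both directions and check they are mutually inverse. Taking $I = J\kp$ in Part (2) makes $\kp/J\kp$ an $R/J$-algebra, and the images $\{s_\la + J\kp,\ s_{\la^*} + J\kp\}$ form a Kumjian-Pask $\La$-family over $R/J$ (the relations descend since they are $R$-linear). By universality of $\mathrm{KP}_{R/J}(\La)$, I get a homomorphism $\psi : \mathrm{KP}_{R/J}(\La) \to \kp/J\kp$. In the other direction, the map $q$ from Part (1) is an $R$-algebra homomorphism $\kp \to \mathrm{KP}_{R/J}(\La)$ vanishing on $J\kp$, so it factors through a homomorphism $\bar q : \kp/J\kp \to \mathrm{KP}_{R/J}(\La)$. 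On generators $\psi$ and $\bar q$ are visibly inverse to each other, so both are isomorphisms.

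\medskip

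The main obstacle I anticipate is the faithfulness input in Part (1): concluding $r + J = 0$ from $(r+J)\bar s_v = 0$ relies on the graded-uniqueness theorem for Kumjian-Pask algebras (the statement that $r s_v \neq 0$ in $\mathrm{KP}_S(\La)$ whenever $0 \neq r \in S$), which is exactly the nontrivial content of the universal construction in \cite{cla16}. Everything else is formal once that faithfulness is in hand; indeed, Parts (2) and (3) are standard quotient/universality manipulations, and the only care required is to confirm at each stage that the defining relations (KP1)--(KP4) are preserved under reduction modulo $J$, which follows because each relation is $R$-linear in the generators.
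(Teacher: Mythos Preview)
Your proposal is correct, but your route to Parts~(1) and~(3) differs genuinely from the paper's. For Part~(1), the paper works entirely inside $\kp$: it expresses $rs_v$ as an element of the finite matricial subalgebra $M_{\Pi F}^s \subseteq \kp_0$ spanned by matrix units $\Theta(s)_{\la,\mu}^{\Pi F}$, then performs an explicit matrix-unit computation to extract a coefficient $r_{\sigma,\sigma}\in J$ equal to $r$. You instead build the reduction homomorphism $q:\kp\to\mathrm{KP}_{R/J}(\La)$ by universality (viewing the target as an $R$-algebra via $R\to R/J$), observe that $q$ annihilates $J\kp$, and invoke \cite[Theorem 3.7(b)]{cla16} in $\mathrm{KP}_{R/J}(\La)$ to conclude $r+J=0$. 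For Part~(3), the paper constructs only the map $\phi:\mathrm{KP}_{R/J}(\La)\to\kp/J\kp$ and then uses the graded uniqueness theorem---with Part~(1) supplying the hypothesis that $\phi((r+J)t_v)\neq 0$---to get injectivity; you instead produce the inverse map $\bar q$ directly from $q$ and check the two are mutually inverse on generators. Your approach is shorter and avoids both the $\Pi E$ machinery and the appeal to graded uniqueness, at the cost of front-loading the existence of $\mathrm{KP}_{R/J}(\La)$ with its faithful vertex projections; the paper's approach keeps Part~(1) internal to $\kp$, which may be preferable if one wants the statement available before the $R/J$-algebra has been discussed.
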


\begin{proof}
For (1), suppose that $rs_v\in J\kp$ for some $r\in R$ and $v\in \La^0$. If $rs_v=0$, we must have $r=0\in J$ by \cite[Theorem 3.7(b)]{cla16}. So assume $rs_v\neq 0$. Since $rs_v\in (J\kp)_0=J\kp_0$, there is a finite span
$$rs_v=\sum_{(\alpha,\beta)\in F}r_{\alpha,\beta}s_\alpha s_{\beta^*}$$
for $rs_v$, where $r_{\alpha,\beta}\in J$ and $d(\alpha)=d(\beta)$ for every $(\alpha,\beta)\in F$. Thus we may consider the matricial subalgebra $M_{\Pi F}^s$ of $\kp_0$ and see that $rs_v\in J M_{\Pi F}^s$. Recall from \cite[Lemma 4.2]{cla16} that $\{\Theta(s)^{\Pi F}_{\la,\mu}: (\la, \mu)\in \Pi F\times_{d,s}\Pi F\}$ forms a set of matrix units which spans $M_{\Pi F}^s$. Hence one can write
$$rs_v=\sum_{(\la, \mu)\in \Pi F\times_{d,s}\Pi F} r_{\la,\mu} \Theta(s)^{\Pi F}_{\la,\mu}  \hspace{1cm} (r_{\la,\mu}\in J).$$
In particular, we have $r(\la)=r(\mu)=v$ for every $r_{\la,\mu}\neq 0$. Fix some $(\sigma,\gamma)\in \Pi F\times_{d,s} \Pi F$ with $r(\la)=r(\mu)=v$; since $rs_v\neq 0$, such $(\sigma,\gamma)$ exists. We then compute
\begin{align*}
r \Theta(s)_{\gamma,\gamma}^{\Pi F}&=\left(\Theta(s)_{\sigma,\gamma}^{\Pi F}\right)^* rs_v \left(\Theta(s)_{\sigma,\gamma}^{\Pi F}\right)\\
&=\left(\Theta(s)_{\sigma,\gamma}^{\Pi F}\right)^*\left(\sum_{(\la,\mu)}r_{\la,\mu} \Theta(s)_{\la,\mu}^{\Pi F}\right)\Theta(s)_{\sigma,\gamma}^{\Pi F}\\
&=\sum_{(\la,\mu)}r_{\la,\mu} \left(\Theta(s)_{\sigma,\gamma}^{\Pi F}\right)^*\left(\Theta(s)_{\la,\mu}^{\Pi F}\Theta(s)_{\sigma,\gamma}^{\Pi F}\right)\\
&=\sum_{(\la,\mu)}r_{\la,\mu} \left(\Theta(s)_{\sigma,\gamma}^{\Pi F}\right)^*\left(\delta_{\mu,\sigma}\Theta(s)_{\la,\gamma}^{\Pi F}\right)\\
&=\sum_{(\la,\mu)}r_{\la,\mu} \delta_{\sigma,\la}\delta_{\mu,\sigma}\Theta(s)_{\gamma,\gamma}^{\Pi F}\\
&=r_{\sigma,\sigma} \Theta(s)_{\gamma,\gamma}^{\Pi F}.
\end{align*}
Therefore, $(r-r_{\sigma,\sigma})\Theta(s)_{\gamma,\gamma}^{\Pi F}=0$ which follows $r=r_{\sigma,\sigma}\in J$ because $\Theta(s)_{\gamma,\gamma}^{\Pi F}\neq 0$. This proves statement (1).

For (2), it suffices to show that the action of $R/J$ on $\kp/I$ is well-defined. Indeed, if $r+J=s+J$ and $x+I=y+I$, then
$$rx-sy=r(x-y)+(r-s)y\in I+J\kp \subseteq I.$$
So, we get $rx+I=sy+I$, as desired.

Finally we prove statement (3). Let $\{t_\la,t_{\la^*}:\la\in\La\}$ be a Kumjian-Pask $\La$-family generating $\mathrm{KP}_{R/J}(\La)$ and view the quotient $\kp/J\kp$ as an $R/J$-algebra by part (2). Since $\{s_\la+J\kp, s_{\la^*}+J\kp:\la\in\La\}$ is a Kumjian-Pask $\La$-family, the universality gives a homomorphism $\phi:\mathrm{KP}_{R/J}(\La)\rightarrow \kp/J\kp$ such that $\phi((r+J)t_\la)=rs_\la+J\kp$ and $\phi((r+J)t_{\la^*})=rs_{\la^*}+J\kp$. Note that $J\kp$ is a graded ideal of $\kp$, so $\kp/J\kp$ is $\Z^k$-graded with the grading components
$$\left(\frac{\kp}{J\kp}\right)_n:=\frac{\kp_n}{J\kp}  \hspace{1cm} (n\in \Z^k).$$
Thus $\phi$ is a graded homomorphism because each grading component $\mathrm{KP}_{R/J}(\La)_n$ is embedded into $(\kp/J\kp)_n$. Moreover, part (1) implies that $\phi((r+J)t_v)\neq 0$ for every $v\in \La^0$ and $r\in R\setminus J$. Now apply the graded uniqueness theorem \cite[Theorem 4.1]{cla16} to obtain the injectivity of $\phi$. As $\phi$ is surjective either, we conclude that it is an $R/J$-algebra isomorphism from $\mathrm{KP}_{R/J}(\La)$ onto $\kp/J\kp$.
\end{proof}

\begin{thm}\label{thm3.8}
Let $\La$ be a finitely aligned $k$-graph and $R$ a unital commutative ring. Then the following statements are equivalent.
\begin{enumerate}
  \item $\La$ is cofinal.
  \item The only saturated hereditary subsets of $\La^0$ are $\emptyset$ and $\La^0$.
  \item Every graded ideal of $\kp$ is of the form $J\kp$ for some ideal $J$ of $R$.
  \item For every ideal $I$ of $\kp$, there exists an ideal $J$ of $R$ such that $I\cap \kp_0\subseteq J\kp\subseteq I$.
  \item For every ideal $I$ of $\kp$containing some $s_v$, where $v\in \La^0$, we have $I=\kp$.
\end{enumerate}
\end{thm}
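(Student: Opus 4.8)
The plan is to prove the graph-theoretic equivalence $(1)\Leftrightarrow(2)$ directly on $\La$, and to close the ring-theoretic statements as a cycle $(2)\Rightarrow(3)\Rightarrow(4)\Rightarrow(5)\Rightarrow(2)$. Throughout I would keep in mind the reversed convention: $v\La w\neq\emptyset$ means there is a path \emph{from} $w$ \emph{to} $v$, so a hereditary set is closed under passing to the sources of paths landing in it, and cofinality says every $v$ is reached from some vertex $x(n)$ of every boundary path $x$.

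For $(1)\Rightarrow(2)$ I would argue contrapositively: given a proper nonempty saturated hereditary $H$, choose $u\in\La^0\setminus H$ and a boundary path $x\in u\partial\La$ (these exist by \cite[Lemma 5.13]{far05}). Using Lemma \ref{lem3.3}, $x$ is also a boundary path of $\La$ all of whose vertices avoid $H$; were some $x(n)$ to reach a fixed $z\in H$, heredity would force $x(n)\in H$, so in fact $z\La x(n)=\emptyset$ for all $n$ and cofinality fails at $z$. For $(2)\Rightarrow(1)$ I would start from a putative witness $v_0,x$ with $v_0\La x(n)=\emptyset$ for all $n$ and consider the upstream closure $U:=\{w:v_0\La w\neq\emptyset\}$. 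This $U$ is hereditary, contains $v_0$, and misses every $x(n)$, so it is proper; the one-step saturation formula recalled before Lemma \ref{lem3.3}, together with the defining exhaustiveness of the boundary path $x$, shows that $x(0)$ is never adjoined during saturation, so $\overline U$ stays proper and contradicts (2).

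The technical heart is $(2)\Rightarrow(3)$. Given a graded ideal $I$, Lemma \ref{lem3.5} makes each $H_{I,r}$ saturated hereditary, hence $\emptyset$ or $\La^0$ by (2); I would set $J:=\{r\in R: rs_v\in I \text{ for all } v\in\La^0\}$ and verify it is an ideal of $R$. The inclusion $J\kp\subseteq I$ is immediate from $rs_\mu s_{\nu^*}=(rs_{r(\mu)})s_\mu s_{\nu^*}$. For $I\subseteq J\kp$ I would take a homogeneous $x\in I$, express it over a finite set $F$ in the matrix units $\Theta(s)^{\Pi F}$, and, exactly as in the computation of Lemma \ref{lem3.7}(1), compress $x$ by these matrix units to isolate each coefficient $r$ in the form $rs_v\in I$; then $H_{I,r}\neq\emptyset$, so (2) forces $r\in J$ and hence $x\in J\kp$.

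The remaining steps are short. For $(3)\Rightarrow(4)$, let $I_{\mathrm{gr}}$ be the largest graded ideal inside an arbitrary $I$; by (3) it equals some $J\kp\subseteq I$, and any $x\in I\cap\kp_0$ is homogeneous, so it generates a \emph{graded} ideal $\langle x\rangle\subseteq I$, whence $x\in\langle x\rangle\subseteq I_{\mathrm{gr}}=J\kp$. For $(4)\Rightarrow(5)$, if $s_v\in I$ then $s_v\in I\cap\kp_0\subseteq J\kp$, and Lemma \ref{lem3.7}(1) with $r=1_R$ gives $1_R\in J$, so $I\supseteq J\kp=\kp$. Finally $(5)\Rightarrow(2)$ uses Lemma \ref{lem3.6}: for nonempty saturated hereditary $H$ pick $v\in H$, so $I_H\ni s_v$ gives $I_H=\kp$ by (5), and $H=\overline H=H_{I_H}=\La^0$. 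The main obstacle I anticipate is the coefficient-extraction step in $(2)\Rightarrow(3)$, where the $\Pi F$ and matrix-unit bookkeeping must be carried far enough to read off that every coefficient lands in $J$; the direction $(2)\Rightarrow(1)$ is conceptually delicate but ultimately reduces to controlling the saturation at the single vertex $x(0)$.
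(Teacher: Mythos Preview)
Your cycle $(2)\Rightarrow(3)\Rightarrow(4)\Rightarrow(5)\Rightarrow(2)$ and the equivalence $(1)\Leftrightarrow(2)$ are organised as in the paper, and the short steps $(3)\Rightarrow(4)\Rightarrow(5)\Rightarrow(2)$ are correct and essentially match the paper's arguments. There are two problems elsewhere.

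The serious gap is in $(2)\Rightarrow(3)$. The matrix units $\Theta(s)^{\Pi F}_{\la,\mu}$ are defined only for $(\la,\mu)\in\Pi F\times_{d,s}\Pi F$, which forces $d(\la)=d(\mu)$; hence $M_{\Pi F}^s\subseteq\kp_0$, and the compression of Lemma~\ref{lem3.7}(1) can only treat degree-$0$ elements. A homogeneous $x\in I_n$ with $n\neq 0$ cannot be ``expressed in the matrix units $\Theta(s)^{\Pi F}$'' at all, and you give no mechanism for reducing such an $x$ to degree~$0$. Your argument therefore establishes at best $I\cap\kp_0\subseteq J\kp$, but the passage from that to $I\subseteq J\kp$ for a graded ideal is exactly the content of the graded uniqueness theorem you are trying to bypass. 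The paper does not attempt coefficient extraction here: it uses Lemma~\ref{lem3.7}(3) to identify $\kp/J\kp\cong\mathrm{KP}_{R/J}(\La)$, observes that the quotient map $\kp/J\kp\to\kp/I$ is graded and kills no $(r+J)t_v$ with $r\notin J$, and then invokes the graded uniqueness theorem \cite[Theorem~4.1]{cla16} to conclude injectivity, whence $I=J\kp$.

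The smaller issue is in $(1)\Rightarrow(2)$. You pick $x\in u\partial\La$ with $u\notin H$ and then assert that Lemma~\ref{lem3.3} forces all vertices of $x$ to avoid $H$; but an arbitrary boundary path of $\La$ starting outside $H$ may certainly enter $H$. What Lemma~\ref{lem3.3} actually supports is the reverse order: first choose $x$ as a boundary path of the sub-$k$-graph $\La\setminus\La H$ (so its vertices avoid $H$ by construction), and then check that $x\in\partial\La$ because every $E\in x(p)\FE$ has $E\setminus EH$ finite exhaustive in $\La\setminus\La H$, so some $\mu\in E\setminus EH\subseteq E$ is an initial segment of $\sigma^p(x)$. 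The paper handles this point by citing \cite[Lemma~5.2]{lew10} directly.
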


\begin{proof}
(1) $\Longrightarrow$ (2): Let $\La$ be cofinal and suppose on the contrary that $H\neq \emptyset, \La^0$ is a nontrivial saturated hereditary subset of $\La^0$. The saturation and hereditary properties of $H$ are equivalent to the conditions (i) and (ii) of \cite[Lemma 5.2]{lew10} for $K:=\La^0\setminus H$, respectively. Since $\La^0\setminus H\neq \emptyset$, \cite[Lemma 5.2]{lew10} gives some $x\in\partial \La$ such that $x(n)\in \La^0\setminus H$ for every $n\leq d(x)$. Take some $v\in H$. By the cofinality, there exists $E\in x(0)\mathrm{FE}(\La)$ such that $v\La s(\alpha)\neq \emptyset$ for every $\alpha\in E$. Since $x\in \partial \La$, there is $\alpha\in E$ such that $x(0,d(\alpha))=\alpha$. In particular, $x(d(\alpha))=s(\alpha)$ and $v\La x(d(\alpha))\neq \emptyset$. On the other hand, we have $v\in H$ and so $x(d(\alpha))\in H$ by the hereditary property. This contradicts the choice of $x$.

(2) $\Longrightarrow$ (1): Suppose that $\emptyset$ and $\La^0$ are the only saturated hereditary subsets of $\La^0$. Take some $v\in H$ and $x\in\partial \La$. If we define $H_v:=\{s(\la):\la\in v\La\}$, then $H_v$ is hereditary and by \cite[Lemma 3.2]{sim06} its saturation is
$$\overline{H_v}=H\cup \left\{ w\in \La^0: \mathrm{there ~ exists ~} E\in w\mathrm{FE}(\La) ~\mathrm{such ~ that~} s(E)\subseteq H_v\right\}.$$
Apply statement (2) to get $\overline{H_v}=\La^0$. So $r(x)\in \overline{H_v}$ and there exists a finite exhaustive set $E$ with $s(E)\subseteq H_v$. Note that if $r(x)\in H$, then $\{r(x)\}$ is finite and exhaustive. Since $x$ is a boundary path, there is $\mu\in E$ such that $x(0,d(\mu))=\mu$. In particular, we have $\mu\in r(x)\La s(\la)$ for some $\la\in v\La$. Hence, $\la\in v\La s(\la)=v \La s(\mu)$ and so $v \La x(d(\mu))\neq \emptyset$. As $v$ and $x$ were arbitrary, we conclude the cofinality of $\La$.

(2) $\Longrightarrow$ (3): Let $I$ be a graded ideal of $\kp$. If we set
$$J:=\{r\in R: rs_v\in I ~ \mathrm{for ~ some ~ } v\in \La^0\},$$
then $J$ is an ideal of $R$. We show that $I=J\kp$. Note that for every $r\in R$, the set $H_{I,r}=\{v\in \La^0:rs_v\in I\}$ is saturated and hereditary. If $r\in J$, then $H_{I,r}\neq \emptyset$ and hence we have $H_{I,r}=\La^0$ and $r\kp \subseteq I$. This follows $J\kp \subseteq I$.

For the reverse containment, we consider the quotient map
$$\left\{
    \begin{array}{l}
      \phi:\frac{\kp}{J\kp} \rightarrow \frac{\kp}{I} \\
      x+J \kp \mapsto x +I  \hspace{1cm} \mathrm{~ for ~}x\in \kp
    \end{array}
  \right.
$$
which is well-defined because $J\kp \subseteq I$. Let $\pi:\mathrm{KP}_{R/J}(\La) \rightarrow \kp/J\kp$ be the isomorphism of Lemma \ref{lem3.7}(3). Note that if $\{t_\la,t_{\la^*}:\la\in \La\}$ is a generating family for $\mathrm{KP}_{R/J}(\La)$, we have $\phi\circ \pi((r+J)t_v)=rs_v+I\neq I$ for all $r\in R\setminus J$ and $v\in \La^0$. Also, since both $I$ and $J\kp$ are graded ideals of $\kp$, then $I/J\kp$ is a graded ideal of $\kp/J\kp\cong \mathrm{KP}_{R/J}(\La)$. Hence the quotient
$$\frac\kp I\cong \frac{\kp/J\kp}{I/J\kp}$$
is a graded $R/J$-algebra and $\phi\circ \pi$ is a graded homomorphism. Now we may apply the gauge invariant uniqueness theorem \cite[Theorem 4.1]{cla16} for $\phi\circ\pi:\mathrm{KP}_{R/J}(\La)\rightarrow \kp/I$ to conclude that $\phi\circ\pi$ is injective. Therefore, $\phi$ is injective, and we get $I=J\kp$ as desired.

(3) $\Longrightarrow$ (4): Suppose that $I$ is an ideal of $\kp$ and $I_0$ is the ideal of $\kp$ generated by $I\cap \kp_0$. Then $I_0$ is a graded ideal because generated by homogeneous elements. By statement (3), there exists an ideal $J$ of $R$ such that $I_0=J\kp$, and so, we have $I\cap \kp_0\subseteq J\kp=I_0 \subseteq I$.

(4) $\Longrightarrow$ (5): If $I$ is an ideal of $\kp$ containing some $s_v$, then $H_I:=\{w\in \La^0:s_w\in I\}$ is a nonempty saturated hereditary subset of $\La^0$ and we have $I_{H_I}\subseteq I$. Since $I_{H_I}$ is a graded ideal by Lemma \ref{lem3.5}, statement (4) implies that there is an ideal $J$ of $R$ so that $J\kp= I_{H_I}$. But $1_R.s_v=s_v\in I_{H_I}=J\kp$ which follows $1_R\in J$ by Lemma \ref{lem3.7}(1). Hence $J=R$ and we get $I=I_{H_I}=\kp$.

(5) $\Longrightarrow$ (2): Suppose that $H$ is a nonempty saturated hereditary subset of $\La^0$. If $v\in H$, then $I_H$ is an ideal of $\kp$ containing $s_v$. So we have $I_H=\kp$ by statement (5) which follows $H_{I_H}=\La^0$. On the other hand, Lemma \ref{lem3.5} says that $H_{I_H}=H$ and hence $H=\La^0$. It follows (2) and completes the proof.
\end{proof}


\section{aperiodicity and generalized cycles}\label{sec4}

There are several aperiodicity conditions in the literature which are equivalent (see \cite[Proposition 2.11]{sho12} and \cite[Proposition 3.6]{lew10}). Here, we consider the following from \cite{rob07,sho12}.

\begin{defn}
Let $\La$ be a finitely aligned $k$-graph. For $v\in\La^0$ and $m\neq n\in\N^k$, we say $\La$ has a \emph{local periodicity $m,n$ at $v$} if for every $x\in v\partial \La$ we have $m\vee n\leq d(x)$ and $\sigma^m(x)=\sigma^n(x)$. We say that $\La$ is \emph{aperiodic} if $\La$ has no local periodicity at all $v\in\La^0$; that is, for every $v\in \La^0$ and $m\neq n\in\N^k$, there exists $x\in v\partial \La$ such that either $m\vee n\nleq d(x)$ or $\sigma^m(x)\neq \sigma^n(x)$.
\end{defn}

In \cite{eva02}, Evans introduces the notion of generalized cycle for higher-rank graphs. In spite of ordinary 1-graphs, we could not completely describe the aperiodicity of $k$-graphs by properties of generalized cycles. However, we give some relations between generalized cycles and the aperiodicity in Corollary \ref{cor4.5} below.

\begin{defn}[{See \cite{eva02,eva12}}]
Let $\La$ be a finitely aligned $k$-graph. A \emph{generalized cycle} in $\La$ is a pair $(\mu,\nu)$ of distinct paths in $\La$ such that $s(\mu)=s(\nu)$, $r(\mu)=r(\nu)$ and $\mathrm{MCE}(\mu\tau, \nu)\neq \emptyset$ for all $\tau\in s(\mu)\La$. A path $\tau \in s(\nu)\La$ is called an \emph{entrance} for $(\mu,\nu)$ whenever $\mathrm{MCE}(\mu,\nu\tau)=\emptyset$.
\end{defn}

We use \cite[Lemma 3.2]{eva12} to prove Lemmas \ref{lem4.3} and \ref{lem4.4} below, which says that a pair $(\mu,\nu)$ with $s(\mu)=s(\nu)$ and $r(\mu)=r(\nu)$ is a generalized cycle if and only if the set $\mathrm{Ext}\left(\mu;\{\nu\}\right)$ is finite and exhaustive.

The proof of next lemma is analogous with that of \cite[Lemma 3.7]{eva12} with a small modification.

\begin{lem}\label{lem4.3}
Let $\La$ be a finitely aligned $k$-graph. If $(\mu,\nu)$ is a generalized cycle in $\La$, then $s_\mu s_{\mu^*}\leq s_\nu s_{\nu^*}$ (in the sense that $(s_\mu s_{\mu^*})(s_\nu s_{\nu^*})=(s_\nu s_{\nu^*})(s_\mu s_{\mu^*})=s_\mu s_{\mu^*}$). Furthermore, $(\mu,\nu)$ has no entrances if and only if $s_\mu s_{\mu^*}=s_\nu s_{\nu^*}$.
\end{lem}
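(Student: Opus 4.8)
The plan is to work directly with the Kumjian-Pask relations (KP1)--(KP4), exploiting the characterization of generalized cycles mentioned just before the statement, namely that $(\mu,\nu)$ is a generalized cycle if and only if $\mathrm{Ext}(\mu;\{\nu\})$ is finite and exhaustive (from \cite[Lemma 3.2]{eva12}). The element $s_\nu s_{\nu^*}$ is a projection (an idempotent), since $s_{\nu^*}s_\nu = s_{s(\nu)}$ by (KP3) applied to $\mu=\nu$ together with the fact that $\Lamin(\nu,\nu)=\{(s(\nu),s(\nu))\}$; the same holds for $s_\mu s_{\mu^*}$. So the claimed inequality $s_\mu s_{\mu^*}\leq s_\nu s_{\nu^*}$ is really an assertion that the first projection is dominated by the second, and the natural route is to show $(s_\nu s_{\nu^*})(s_\mu s_{\mu^*})=s_\mu s_{\mu^*}$, after which the reverse product follows symmetrically (or by taking adjoints, since these are self-adjoint idempotents).

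First I would compute $(s_\nu s_{\nu^*})(s_\mu s_{\mu^*})$ by inserting (KP3) into the middle factor $s_{\nu^*}s_\mu$, obtaining
\[
s_\nu s_{\nu^*} s_\mu s_{\mu^*}=\sum_{(\alpha,\beta)\in\Lamin(\nu,\mu)} s_{\nu\alpha}s_{\beta^*}s_{\mu^*}
=\sum_{(\alpha,\beta)\in\Lamin(\nu,\mu)} s_{\nu\alpha}s_{(\mu\beta)^*}.
\]
Here the key structural input is that the generalized cycle hypothesis forces $\mathrm{MCE}(\mu\tau,\nu)\neq\emptyset$ for all $\tau$, which should be used to show that the relevant minimal common extensions all factor through $\mu$; concretely, I expect that for each such pair one has $\nu\alpha=\mu\beta$ with the paths arranged so that the sum telescopes back to $s_\mu s_{\mu^*}$. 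The cleanest way to organize this is to invoke the finite-exhaustive characterization: since $\mathrm{Ext}(\mu;\{\nu\})$ is finite and exhaustive, relation (KP4) applied to this set gives $\prod_{\alpha\in\mathrm{Ext}(\mu;\{\nu\})}(s_{s(\mu)}-s_\alpha s_{\alpha^*})=0$, and this identity is exactly what is needed to collapse the sum. Multiplying on the left by $s_\mu$ and on the right by $s_{\mu^*}$ then converts this into a statement about $s_\mu s_{\mu^*}$ versus $\sum_\alpha s_{\mu\alpha}s_{(\mu\alpha)^*}$, which in turn relates to $s_\nu s_{\nu^*}$.

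For the second assertion, I would use the entrance condition. Recall $\tau\in s(\nu)\La$ is an entrance precisely when $\mathrm{MCE}(\mu,\nu\tau)=\emptyset$. The element $s_\nu s_{\nu^*}-s_\mu s_{\mu^*}$ is the difference of the two projections and should be expressible, via (KP3) and the shift structure, as a sum of terms indexed by paths $\tau$ with $\mathrm{MCE}(\mu,\nu\tau)=\emptyset$; the presence of an entrance should make one such term a nonzero orthogonal projection, giving $s_\mu s_{\mu^*}\neq s_\nu s_{\nu^*}$, while the absence of entrances should make every such term vanish. The forward direction (no entrances $\Rightarrow$ equality) will follow by showing that exhaustiveness of $\mathrm{Ext}(\mu;\{\nu\})$ combined with the no-entrance condition forces $s_\nu s_{\nu^*}\leq s_\mu s_{\mu^*}$ as well, hence equality. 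The main obstacle I anticipate is the converse direction: producing, from a single entrance $\tau$, an explicit nonzero element witnessing that $s_\nu s_{\nu^*}-s_\mu s_{\mu^*}\neq 0$. This requires care because in the algebraic (as opposed to $C^*$) setting one cannot argue via positivity or norms; instead I would apply a uniqueness-type argument, exhibiting a Kumjian-Pask representation (for instance one built from boundary paths through $s(\nu)$ that begin with $\tau$) on which the image of $s_\nu s_{\nu^*}-s_\mu s_{\mu^*}$ is visibly nonzero, thereby certifying non-triviality in the universal algebra.
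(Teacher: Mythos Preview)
Your approach is essentially the one the paper defers to (the paper gives no independent proof, merely noting that the argument is analogous to \cite[Lemma~3.7]{eva12}); working via (KP3) together with the finite-exhaustive characterization of $\mathrm{Ext}(\mu;\{\nu\})$ and (KP4) is exactly the right route for the inequality, and the ``no entrances'' direction is just the observation that $(\nu,\mu)$ is then also a generalized cycle, giving the reverse inequality by symmetry.

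One remark on execution: you overcomplicate the converse of the second assertion. Given an entrance $\tau$, you do not need a boundary-path representation or any uniqueness theorem. Simply compute
\[
(s_\nu s_{\nu^*}-s_\mu s_{\mu^*})\,s_{\nu\tau}
= s_\nu s_{s(\nu)} s_\tau - s_\mu(s_{\mu^*}s_{\nu\tau})
= s_{\nu\tau}-0
= s_{\nu\tau},
\]
where $s_{\mu^*}s_{\nu\tau}=0$ by (KP3) since $\Lamin(\mu,\nu\tau)=\emptyset$. The element $s_{\nu\tau}$ is nonzero because $s_{(\nu\tau)^*}s_{\nu\tau}=s_{s(\tau)}\neq 0$ (by \cite[Theorem~3.7(b)]{cla16}). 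Hence $s_\mu s_{\mu^*}\neq s_\nu s_{\nu^*}$ directly. Also, when you apply (KP4) to $E=\mathrm{Ext}(\mu;\{\nu\})$, make explicit that all $\beta\in E$ have the same degree $d(\mu)\vee d(\nu)-d(\mu)$, so the projections $s_\beta s_{\beta^*}$ are mutually orthogonal and the product $\prod_\beta(s_{s(\mu)}-s_\beta s_{\beta^*})$ genuinely equals $s_{s(\mu)}-\sum_\beta s_\beta s_{\beta^*}$.
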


\begin{lem}\label{lem4.4}
Let $\La$ have a local periodicity $m$, $n$ at $v$. Then for every $\mu\in v\La^{m\vee n}$, there is a unique $\nu\in v\La^{n-m+m\vee n}$ such that $(\mu,\nu)$ is a generalized cycle. Moreover, such generalized cycles have no entrances.
\end{lem}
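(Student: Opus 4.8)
The plan is to build $\nu$ from a boundary path, verify the generalized-cycle condition and uniqueness directly from boundary paths, and then treat the absence of entrances as the genuinely hard point.

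First I would fix $\mu\in v\La^{m\vee n}$ and choose any $y\in s(\mu)\partial\La$, so that $x:=\mu y\in v\partial\La$ is a boundary path with $x(0,m\vee n)=\mu$. Writing $q:=n-m+m\vee n$, note $q\ge n$ and $q\le d(x)$: indeed local periodicity forces $\sigma^m(x)=\sigma^n(x)$, hence $d(x)-m=d(x)-n$, so $d(x)_i=\infty$ whenever $m_i\ne n_i$, while $q_i=(m\vee n)_i\le d(x)_i$ when $m_i=n_i$. I then set $\nu:=x(0,q)$. Using $\sigma^m(x)=\sigma^n(x)$ in the form $x(m+a,m+b)=x(n+a,n+b)$ one computes $x(n,q)=x(m,m\vee n)$ and $x(q)=x(m\vee n)$, so that $\nu=x(0,n)\,x(n,q)=\mu(0,n)\,\mu(m,m\vee n)$, with the composition legitimate because $x(n)=x(m)$. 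This shows $\nu$ is independent of the chosen $y$, has $r(\nu)=v=r(\mu)$, $s(\nu)=x(q)=x(m\vee n)=s(\mu)$, $d(\nu)=q$, and $\nu\ne\mu$ since $d(\nu)=q\ne m\vee n=d(\mu)$.

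Next, $(\mu,\nu)$ is a generalized cycle. Given $\tau\in s(\mu)\La$, I extend $\mu\tau$ to a boundary path $w=\mu\tau z\in v\partial\La$. Since $w$ extends $\mu$, the computation above applies verbatim to give $w(0,q)=\nu$; thus $\mu\tau=w(0,m\vee n+d(\tau))$ and $\nu=w(0,q)$ are both initial segments of $w$, and their minimal common extension $w\big(0,(m\vee n+d(\tau))\vee q\big)$ is again a segment of $w$, so $\mathrm{MCE}(\mu\tau,\nu)\ne\emptyset$. For uniqueness, suppose $(\mu,\nu')$ is a generalized cycle with $\nu'\in v\La^{q}$. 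With $P:=(m\vee n)\vee q$ and $\tau:=x(m\vee n,P)$ one has $\mu\tau=x(0,P)$ and $P\ge q$, so every element of the nonempty set $\mathrm{MCE}(x(0,P),\nu')$ has degree $P=d(x(0,P))$ and hence equals $x(0,P)$ with $\nu'\le x(0,P)$; therefore $\nu'=x(0,P)(0,q)=x(0,q)=\nu$.

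The hard part is the absence of entrances. Because $\mathrm{MCE}$ is symmetric, $\tau\in s(\nu)\La$ fails to be an entrance precisely when $\mathrm{MCE}(\nu\tau,\mu)\ne\emptyset$, i.e. ``no entrances'' is equivalent to the statement that $(\nu,\mu)$ is itself a generalized cycle; by Lemma \ref{lem4.3} this is in turn equivalent to $s_\mu s_{\mu^*}=s_\nu s_{\nu^*}$. Running the argument of the previous paragraph backwards, it all reduces to the reconstruction claim that every boundary path $w'\in v\partial\La$ with $w'(0,q)=\nu$ satisfies $w'(0,m\vee n)=\mu$; granting this, extending $\nu\tau$ to such a $w'$ exhibits both $\mu$ and $\nu\tau$ as segments of $w'$ and yields $\mathrm{MCE}(\nu\tau,\mu)\ne\emptyset$. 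I expect this reconstruction to be the main obstacle, and it is exactly where local periodicity must be used for \emph{all} boundary paths out of $v$, not merely for $w'$: the relation $\sigma^m(w')=\sigma^n(w')$ by itself leaves the coordinates $i$ with $m_i>n_i$ undetermined on the ``transient'' range of positions between $n_i$ and $m_i$, which is exactly the range in which $w'(0,m\vee n)$ can a priori differ from $\mu$ while still agreeing on the degree-$n$ prefix and on $\mu(m,m\vee n)$. To close this gap I would argue by contradiction: if $w'(0,m\vee n)=:\mu''\ne\mu$, then $\mu$ (carried by the reference path $x$) and $\mu''$ (carried by $w'$) agree on these two pieces but disagree on the transient coordinates, and splicing $x$ and $w'$ along the offending coordinates produces a morphism $\Omega_{k}\to\La$ based at $v$ whose $m$- and $n$-shifts disagree; verifying that this spliced morphism is genuinely a boundary path and is non-periodic contradicts the hypothesis that $\La$ has local periodicity $m,n$ at $v$, forcing $\mu''=\mu$.
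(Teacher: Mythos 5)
Your construction of $\nu$, the verification that $(\mu,\nu)$ is a generalized cycle, and your uniqueness argument are all sound; in fact the uniqueness step (forcing every element of $\mathrm{MCE}(\mu\tau,\nu')$ to equal $x(0,P)$ by a degree count, so that $\nu'$ is a prefix of $x$) is more elementary and self-contained than the paper's, which instead invokes Lemma \ref{lem4.3} and the identity $s_{\nu^*}s_{\nu'}=\delta_{\nu,\nu'}s_{s(\nu)}$.

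The gap is in the no-entrances step. You reduce it to the ``reconstruction claim'' that \emph{every} $w'\in v\partial\La$ with $w'(0,q)=\nu$ must satisfy $w'(0,m\vee n)=\mu$, and then propose to prove this by ``splicing $x$ and $w'$ along the offending coordinates''. That splicing is not a defined operation in a $k$-graph: the factorisation property does not allow you to recombine two paths coordinatewise into a new graph morphism, and nothing in the sketch actually produces the non-periodic boundary path you need for the contradiction. More importantly, the whole detour is unnecessary, because $\mathrm{MCE}(\mu,\nu\tau)\neq\emptyset$ only requires exhibiting \emph{one} common extension, not controlling all boundary paths through $\nu\tau$. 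Given $\tau\in s(\nu)\La=s(\mu)\La$, pick $y\in s(\tau)\partial\La$ and set $z:=\tau y$ and $w:=\mu z\in v\partial\La$. Since $q=((m\vee n)-m)+n$, local periodicity gives $\sigma^q(w)=\sigma^{(m\vee n)-m}(\sigma^n(w))=\sigma^{(m\vee n)-m}(\sigma^m(w))=\sigma^{m\vee n}(w)=z$, and combined with your prefix computation $w(0,q)=\nu$ this yields $w=\nu z=\nu\tau y$. Thus the single boundary path $w$ has both $\mu=w(0,m\vee n)$ and $\nu\tau=w(0,q+d(\tau))$ as initial segments, so $w(0,(m\vee n)\vee(q+d(\tau)))\in\mathrm{MCE}(\mu,\nu\tau)$ and $\tau$ is not an entrance. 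This is exactly how the paper handles the point you flagged as the main obstacle.
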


\begin{proof}
Fix $\mu\in v\La^{m\vee n}$ and let $x\in s(\mu)\partial \La$. Then $\mu x\in v\partial \La$ by \cite[Lemma 5.13]{far05}. Note that the local periodicity implies $\sigma^m(\mu x)=\sigma^n(\mu x)$, and so, $d(\mu x)-m=d(\mu x)-n$. In particular, we have $d(\mu x)_i=\infty$ for each index $i$ with $m_i\neq n_i$. We set $\nu:=\mu x(0,n-m+m\vee n)$ and show that $(\mu,\nu)$ is a generalized cycle without entrances. Again, the local periodicity gives that
\begin{equation}\label{equ4.1}
\nu=\mu x(0,n)\mu x(n,n-m+m\vee n)=\mu x(0,n)\mu(m,m\vee n).
\end{equation}
Hence, $s(\nu)=s(\mu)$ and $r(\nu)=r(\mu)$. To see $(\mu,\nu)$ is a generalized cycle without without entrances, we show that $\mathrm{MCE}(\mu\tau,\nu)\neq \emptyset$ and $\mathrm{MCE}(\mu,\nu\tau)\neq \emptyset$ for all $\tau\in s(\mu)\La$. For this, take arbitrary $\tau\in s(\mu)\La$. Let $y\in s(\tau)\partial\La$ and define $z:=\tau y\in \partial \La$. Then
\begin{align*}
\mu z&=\mu z(0,n)\sigma^n(\mu z)\\
&=\mu z(0,n)\sigma^m(\mu z)=\mu z(0,n) \mu z(m,m\vee n)\sigma^{m\vee n}(\mu z)\\
&=\mu(0,n)\mu(m,m\vee n)z=\nu z \hspace{1cm} (\mathrm{by~ \ref{equ4.1}}).
\end{align*}
We also have
$$\mu z(0,m\vee n+d(\tau))=\mu \tau y(0,m\vee n+d(\tau))=\mu \tau,$$
$$\mu z(0,n-m+m\vee n)=\nu z(0,n-m+m\vee n)=\nu$$
and hence, $\mu z(0,d(\mu\tau)\vee d(\nu))\in \mathrm{MCE}(\mu\tau,\nu)$.

Similarly, since $\mu z=\nu z=\nu\tau y$, we have
$$\mu z(0,(m\vee n)\vee(d(\tau)+n-m+m\vee n))=\mu z(0,(n-m+d(\tau))+m\vee n)\in\mathrm{MCE}(\nu\tau,\mu).$$
Therefore, both $\mathrm{MCE}(\mu\tau,\nu)$ and $\mathrm{MCE}(\nu\tau,\mu)$ are nonempty which imply that $(\mu,\nu)$ is a generalized cycle without any entrance.

To complete the proof, we show that such morphism $\nu$ is unique in $v\La^{n-m+m\vee n}$. Indeed, if $(\mu,\nu)$ and $(\mu,\nu')$ are two generalized cycles such that $\nu,\nu'\in v\La^{n-m+m\vee n}$, then $s_\nu s_{\nu^*}=s_\mu s_{\mu^*}=s_{\nu'}s_{\nu'^*}$ by Lemma \ref{lem4.3}. This turns out $0\neq s_{\nu^*}s_{\nu'}=\delta_{\nu,\nu'}s_{s(\nu)}$, and so $\nu=\nu'$ as desired.
\end{proof}

As usual, we say $\mu\in\La^{\neq 0}$ is a \emph{cycle} whenever $s(\mu)=r(\mu)$. Note that if $\mu$ is a cycle, then $(\mu,\{s(\mu)\})$ would be a generalized cycle in $\La$. Following \cite{eva12}, a cycle $\mu\in\La$ is called an \emph{initial cycle} if we have $r(\mu)\La^{e_i}=\emptyset$ whenever $d(\mu)_i=0$.

\begin{cor}\label{cor4.5}
Let $\La$ be a finitely aligned $k$-graph. Then
\begin{enumerate}
  \item If every generalized cycle has an entrance, $\La$ is aperiodic.
  \item If $\La$ is aperiodic, every initial cycle in $\La$ has an entrance.
\end{enumerate}
\end{cor}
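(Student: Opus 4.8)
The proof naturally splits into the two implications, each handled by contraposition.

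For statement (1) the plan is to show that failure of aperiodicity produces a generalized cycle without an entrance, so that the contrapositive is immediate from Lemma \ref{lem4.4}. Suppose $\La$ is not aperiodic, so it has a local periodicity $m,n$ at some $v\in\La^0$ with $m\neq n$. Since $v\partial\La\neq\emptyset$ by \cite[Lemma 5.13]{far05} and every $x\in v\partial\La$ satisfies $m\vee n\leq d(x)$, the path $x(0,m\vee n)$ lies in $v\La^{m\vee n}$, so this set is nonempty. Choosing any $\mu\in v\La^{m\vee n}$ and applying Lemma \ref{lem4.4} yields a $\nu$ with $(\mu,\nu)$ a generalized cycle having no entrance, contradicting the hypothesis that every generalized cycle has an entrance. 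Hence $\La$ is aperiodic.

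The substance lies in statement (2), again by contraposition: assuming some initial cycle $\mu$, with $v:=s(\mu)=r(\mu)$ and $p:=d(\mu)\neq 0$, has no entrance, I will produce a local periodicity at $v$. The decisive observation I would record first is that, for the generalized cycle $(\mu,v)$, having no entrance is equivalent to $\{\mu\}$ being a finite exhaustive set at $v$: an entrance is a $\tau\in v\La$ with $\mathrm{MCE}(\mu,\tau)=\emptyset$, while $\{\mu\}\in v\FE$ means precisely that $\Lamin(\mu,\tau)\neq\emptyset$ for every $\tau\in v\La$ (note $\{\mu\}\subseteq v\La\setminus\{v\}$ since $p\neq 0$). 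With this reformulation the boundary-path axiom becomes a tool to control \emph{every} $x\in v\partial\La$.

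The core of the argument is then an induction showing $x(np,(n+1)p)=\mu$ for all $n\geq 0$. Having reached $x(0,np)=\mu^n$ with $np\leq d(x)$, I apply the boundary-path condition of $x$ at the vertex $x(np,np)=v$ to the set $\{\mu\}\in v\FE$; it forces $x(np,np+p)=\mu$ and in particular $(n+1)p\leq d(x)$, advancing the induction (one checks $(np,np)\in\Omega_{k,m}^0$ at each step). Consequently $d(x)_i=\infty$ whenever $p_i>0$, and $x$ coincides with the infinite repetition $\mu^\infty$ in those coordinates. For an index $i$ with $p_i=0$, the initial-cycle hypothesis $v\La^{e_i}=\emptyset$ prevents $x$ from moving in direction $i$, since otherwise $x(0,e_i)\in v\La^{e_i}$; hence $d(x)_i=0$. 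Thus every $x\in v\partial\La$ equals the single periodic path $\mu^\infty$, whose degree $m$ satisfies $m-p=m$, so $\sigma^p(x)$ is defined and $\sigma^0(x)=x=\sigma^p(x)$. This is exactly a local periodicity $0,p$ at $v$ with $0\neq p$, so $\La$ is not aperiodic, and the contrapositive gives (2).

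I expect the main obstacle to be the passage from the no-entrance condition to control over all boundary paths; the right move is to reinterpret no-entrance as the exhaustiveness of $\{\mu\}$ and feed this into the definition of $\partial\La$, after which the initial-cycle condition is needed only to kill the coordinates outside the support of $p$. The remaining care is purely bookkeeping: verifying the membership $\{\mu\}\subseteq v\La\setminus\{v\}$ and that each shift point $(np,np)$ lies in $\Omega_{k,m}^0$.
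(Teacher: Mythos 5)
Your proposal is correct and follows essentially the same route as the paper's: part (1) is the contrapositive of Lemma \ref{lem4.4} (with the useful explicit check that $v\La^{m\vee n}\neq\emptyset$ via $v\partial\La\neq\emptyset$), and part (2) argues by contraposition that an entrance-free initial cycle $\mu$ forces every $x\in r(\mu)\partial\La$ to equal $\mu^\infty$, giving local periodicity $0,d(\mu)$ at $r(\mu)$. Your explicit reformulation of ``no entrance'' as $\{\mu\}\in r(\mu)\mathrm{FE}(\La)$, fed into the boundary-path axiom, is exactly the mechanism implicit in the paper's terser step ``$d(\mu)\leq d(x)$ and $\mathrm{MCE}(\mu,x(0,d(\mu)))\neq\emptyset$.''
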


\begin{proof}
Statement (1) follows from Lemma \ref{lem4.4}. For (2), we prove the contrapositive statement. So, assume $\mu$ is an initial cycle with no entrances. Then by the factorisation property, there exists a unique functor $\mu^\infty$ such that $\mu^\infty(ld(\mu),(l+1)d(\mu))=\mu$ for all $l\in\mathbb{N}$. Since $r(\mu)\La^{e_i}=\emptyset$ whenever $d(\mu)_i=0$, similar to proof of \cite[Lemma 2.11]{rae03} we may show that the graph morphism $\mu^\infty$ is a boundary path. Take an arbitrary $x\in r(\mu)\partial \La$. As $\mu$ has no entrances, we must have $d(\mu)\leq d(x)$ and $\mathrm{MCE}(\mu,x(0,d(\mu)))\neq\emptyset$, which follows $x(0,d(\mu))=\mu$ and $x=\mu\sigma^{d(\mu)}(x)$. An inductive argument shows also that $x=\mu^l \sigma^{ld(\mu)}(x)$ for every $l\in \mathbb{N}$, and hence $x=\mu^\infty$. Since $x$ was arbitrary, we conclude that $r(\mu)\partial\La=\{\mu^\infty\}$, and therefore, $\La$ has a local periodicity $0$, $d(\mu)$ at $r(\mu)$.
\end{proof}

We know known that $\La$ is aperiodic and cofinal if and only if $\kp$ is basically simple (see \cite[Theorem 5.14]{pin13}, \cite[Theorem 8.5]{cla14}, and \cite[Theorem 9.3]{cla16}). In the following, we describe the ideal structure of basically simple Kumjian-Pask algebras.

\begin{prop}\label{prop4.6}
Let $\La$ be a finitely aligned $k$-graph and $R$ a unital commutative ring. Then the following are equivalent:
\begin{enumerate}
  \item $\La$ is aperiodic and cofinal.
  \item $\La$ is aperiodic and the only saturated hereditary subsets of $\La^0$ are $\emptyset$ and $\La^0$.
  \item Every ideal of $\kp$ is of the form $J\kp$ for some ideal $J$ of $R$.
\end{enumerate}
\end{prop}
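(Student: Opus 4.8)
The plan is to prove the cycle $(1)\Leftrightarrow(2)$, $(1)\Rightarrow(3)$, and $(3)\Rightarrow(1)$, using Theorem \ref{thm3.8} to handle everything involving cofinality and using the known equivalence ``$\La$ is aperiodic and cofinal $\iff$ $\kp$ is basically simple'' to bring aperiodicity into play. The equivalence $(1)\Leftrightarrow(2)$ is essentially free: it is precisely the equivalence $(1)\Leftrightarrow(2)$ of Theorem \ref{thm3.8} with the extra clause ``$\La$ is aperiodic'' carried along on both sides. Recall that an ideal $I$ is \emph{basic} if $rs_v\in I$ with $r\neq 0_R$ forces $s_v\in I$.

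For $(1)\Rightarrow(3)$, I fix an ideal $I$ of $\kp$ and set $J:=\{r\in R : rs_v\in I \text{ for some } v\in\La^0\}$. First I would verify that $J$ is an ideal of $R$: stability under multiplication by $R$ is immediate, and stability under addition uses cofinality, since by Lemma \ref{lem3.5} each set $H_{I,r}=\{v : rs_v\in I\}$ is saturated hereditary, hence $\emptyset$ or $\La^0$ by Theorem \ref{thm3.8}$(2)$. The same dichotomy shows that $r\in J$ implies $rs_w\in I$ for \emph{every} $w\in\La^0$, and therefore $r s_\mu s_{\nu^*}=s_\mu(rs_{s(\mu)})s_{\nu^*}\in I$; thus $J\kp\subseteq I$.

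The remaining, more delicate inclusion $I\subseteq J\kp$ is obtained by passing to the quotient. By Lemma \ref{lem3.7}$(3)$ there is an isomorphism $\kp/J\kp\cong\mathrm{KP}_{R/J}(\La)$, under which $I/J\kp$ becomes an ideal of $\mathrm{KP}_{R/J}(\La)$. The key observation is that $I/J\kp$ contains no nonzero vertex multiple $(r+J)t_v$: such a containment would give $rs_v\in I+J\kp=I$, forcing $r\in J$ and hence $(r+J)t_v=0$. In particular $I/J\kp$ is a basic ideal that contains no vertex. Since aperiodicity and cofinality are intrinsic to $\La$ and independent of the coefficient ring, $\La$ is still aperiodic and cofinal over $R/J$, so $\mathrm{KP}_{R/J}(\La)$ is basically simple; a basic ideal omitting every vertex cannot be the whole algebra, so it must be $0$. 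This yields $I=J\kp$ (the case $J=R$ gives $I=\kp=J\kp$ at once). I expect this quotient step to be the main obstacle, the crux being the recognition that $I/J\kp$ is basic with no vertices and that basic simplicity may legitimately be invoked after changing the coefficient ring from $R$ to $R/J$.

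Finally, for $(3)\Rightarrow(1)$ I would show that $\kp$ is basically simple and then quote the cited equivalence. Let $I$ be a nonzero basic ideal; by $(3)$ write $I=J\kp$ with $J$ an ideal of $R$, and pick $0_R\neq r\in J$ (possible as $I\neq 0$). Then $rs_v\in J\kp=I$ for all $v\in\La^0$, and basicness together with $r\neq 0_R$ forces $s_v\in I$ for every $v$, whence $I=\kp$. Thus every nonzero basic ideal is all of $\kp$, i.e.\ $\kp$ is basically simple, and the known characterization delivers $(1)$.
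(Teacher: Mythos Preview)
Your proof is correct and closely parallels the paper's. The equivalence $(1)\Leftrightarrow(2)$ and the construction of $J$ with $J\kp\subseteq I$ in $(2)\Rightarrow(3)$ are identical. For the reverse inclusion $I\subseteq J\kp$, the paper also passes to $\mathrm{KP}_{R/J}(\La)$ via Lemma~\ref{lem3.7}(3), but rather than invoking basic simplicity it applies the Cuntz--Krieger uniqueness theorem \cite[Theorem~8.1]{cla16} directly to the composite $\mathrm{KP}_{R/J}(\La)\to\kp/J\kp\to\kp/I$; this is essentially the same mechanism, since basic simplicity is itself proved via that uniqueness theorem.

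The only genuine difference is in $(3)\Rightarrow(1)$. The paper does not go through basic simplicity: it obtains cofinality from Theorem~\ref{thm3.8} and then proves aperiodicity by showing that the boundary-path representation $\pi_s$ of \cite[Definition~3.9]{cla16} is injective (its kernel contains no $rs_v$ with $r\neq 0$, so by $(3)$ it must equal $J\kp$ with $J=0$), and finally invokes \cite[Corollary~8.3]{cla16}. Your route, showing directly that $\kp$ is basically simple and then citing the known characterization \cite[Theorem~9.3]{cla16}, is shorter and equally valid; it simply packages the two conclusions (cofinal and aperiodic) into one cited result rather than establishing them separately.
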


\begin{proof}
The implication (1) $\Rightarrow$ (2) follows from Theorem \ref{thm3.8}. For (2) $\Rightarrow$ (3), let $I$ be an ideal of $\kp$. If we define
$$J:=\{r\in R: rs_v\in I ~ \mathrm{for ~ some ~} v\in \La^0 \},$$
then $J$ is an ideal of $R$. Similar to the proof of (2) $\Rightarrow$ (3) in Theorem \ref{thm3.8}, we can show $J\kp\subseteq I$. Notice that we have $rs_v\notin I$ for every $r\in R\setminus J$ and $v\in \La^0$. Consider the quotient homomorphism $q:\kp/J\kp \rightarrow \kp/I$ and let us denote $\pi:\mathrm{KP}_{R/J}(\La) \rightarrow \kp/J\kp$ the isomorphism of Lemma \ref{lem3.7}(3). If $\{t_\la, t_{\la^*}:\la\in\La\}$ is a generating $\La$-family for $\mathrm{KP}_{R/J}(\La)$, we have $q\circ\pi((r+J)t_v)=q(rs_v+J\kp)\neq I$ for all $r\in R\setminus J$ and $v\in\La^0$. Then the Cuntz-Krieger uniqueness theorem \cite[Theorem 8.1]{cla16} implies that $q\circ \pi$ is injective. Thus, $q$ is injective either, and we get $I=J\kp$, as desired.

(3) $\Longrightarrow$ (1). Suppose that the statement (3) holds. By Theorem \ref{thm3.8}(3), $\La$ is cofinal. So, we show that $\La$ is aperiodic. Let $\pi_s:\kp \rightarrow \mathrm{End}(\mathbb{F}_R(\partial \La))$ be the boundary path representation of \cite[Definition 3.9]{cla16}. \cite[Proposition 3.6]{cla16} implies that $\pi_s(rs_v)\neq 0$ for every $r\in R\setminus \{0\}$ and $v\in \La^0$, and so, the ideal $\ker\pi_s$ contains no elements $rs_v$ for $r\in R\setminus \{0\}$ and $v\in \La^0$. By statement (3), we must have $\ker \pi_s=(0)$, and hence $\pi_s$ is injective. Now we apply \cite[Corolary 8.3]{cla16} to obtain the aperiodicity of $\La$.
\end{proof}


\section{Purely infinite simple Kumjian-Pask algebras}\label{sec6}

In this section, we use the notion of generalized cycles to give some conditions for $R$ and $\La$ in Theorem \ref{thm6.4} under which the associated Kumjian-Pask algebra $\kp$ is purely infinite simple. Since every ordinary cycle in $\La$ may be considered as a generalized cycle, this result is the extension of \cite[Theorem 11]{abr06} and \cite[Theorem 3.1]{lar13} to Kumjian-Pask algebras.

Let $A$ be a ring. Recall from \cite[Definition 1.2]{ara02} that an idempotent $e$ in $A$ is called \emph{infinite} if $eA$ is isomorphic to a proper direct summand of itself as right $A$-modules. We say that $A$ is \emph{purely infinite} if every nonzero right ideal of $A$ contains an infinite idempotent. It is a consequence of \cite[Theorem 1.6]{ara02} that the definition of purely infinite simple rings is left-right symmetric.

We need the next lemma, Lemma \ref{lem5.2}, to prove Theorem \ref{thm6.4} which is an extension of \cite[Proposition 4.9]{pin13} to finitely aligned $k$-graphs. However, it seems that we cannot apply the computations of \cite{pin13} in the non-row-finite setting. We use the theory of Steinberg algebras to prove Lemma \ref{lem5.2}.

Let us first briefly review from \cite[Section 5]{cla16} the construction of Steinberg algebra $A_R(\mathcal{G}_\La)$ associated to a Kumjian-Pask algebra $\kp$. Fix a finitely aligned $k$-graph $\La$. Associated to the boundary path space $\partial \La$, we define the groupoid $\mathcal{G}_\La$ such that
\begin{align*}
\mathrm{Obj}(\mathcal{G}_\La)&:=\partial \La\\
\mathrm{Mor}(\mathcal{G}_\La)&:=\left\{(\la z,d(\la)-d(\mu), \mu z): \la,\mu\in\La, s(\la)=s(\mu), z\in s(\la)\partial\La \right\},
\end{align*}
and the range and source maps are defined by $r(x,m,y):=x$ and $s(x,m,y):=y$. Moreover, the composition and inversion are as follows
\begin{align*}
(x,m,y)\circ (y,n,z)&:=(x,m+n,z), \mathrm{~and}\\
(x,m,y)^{-1}&:=(y,-m,x).
\end{align*}
For every $\la\in\La$ and non-exhaustive set $G\subseteq s(\la)\La$, write $Z(\la):=\la\partial \La$ and
$$Z(\la\setminus G):=Z(\la)\setminus \left(\bigcup_{\nu\in G} Z(\la\nu)\right).$$
If $s(\mu)=s(\la)$ we define
$$Z(\la*_s \mu\setminus G):=Z(\la*_s\mu)\setminus \left(\bigcup_{\nu\in G}Z(\la\nu*_s\mu\nu)\right),$$
where
$$Z(\la*_s \mu):=\left\{(x,d(\la)-d(\mu),y):x\in Z(\la),y\in Z(\mu) \mathrm{~and~} \sigma^{d(\la)}(x)=\sigma^{d(\mu)}(y)\right\}.$$
Then the sets $Z(\la*_s\mu\setminus G)$ form a basis of compact open elements for a second-countable, Hausdorff topology on $\mathcal{G}_\La$.

\begin{defn}
Let $\La$ be a finitely aligned $k$-graph and $R$ a unital commutative ring. The $R$-algebra
$$A_R(\mathcal{G}_\La):=\left\{f:\mathcal{G}_\La\rightarrow R: f ~\mathrm{is~locally~constant~with~compact~support}\right\}$$
with pointwise addition, scalar multiplication, and the convolution
$$f*g(t):=\sum_{\substack{s,t\in \mathcal{G}_\La\\ r(t)=r(s)}}f(t)g(s^{-1}t)$$
is called the \emph{Steinberg algebra associated to $\La$}.
\end{defn}

By \cite[Proposition 5.4]{cla16}, there is an $R$-algebra isomorphism $\pi:\kp\rightarrow A_R(\mathcal{G}_\La)$ such that $\pi(s_\la)=1_{Z(\la*_s s(\la))}$ and $\pi(s_{\la^*})=1_{Z(s(\la)*_s\la)}$ for all $\la\in\La$.

\begin{lem}\label{lem5.2}
Let $\La$ be an aperiodic finitely aligned $k$-graph. Then for every nonzero element $a\in \kp$, there exist $c,d\in \kp$ such that $cad=rs_v$ for some $r\in R\setminus\{0\}$ and $v\in \La^0$.
\end{lem}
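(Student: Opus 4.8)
The plan is to transport the statement through the isomorphism $\pi\colon\kp\to A_R(\mathcal{G}_\La)$ and argue inside the Steinberg algebra. Under $\pi$ the target $rs_v$ becomes $r\,1_{Z(v)}$, where $Z(v)=v\partial\La$ is a compact open subset of the unit space $\mathcal{G}_\La^{(0)}=\partial\La$, and the multipliers $c,d$ may be taken to be products of the generators $s_\la,s_{\la^*}$ (whose images are characteristic functions of basic bisections). So it is enough to show: for each nonzero $f\in A_R(\mathcal{G}_\La)$ one can left- and right-multiply by characteristic functions of compact open bisections so as to reach $r\,1_V$ for some $0\ne r\in R$ and some nonempty compact open $V\subseteq\mathcal{G}_\La^{(0)}$. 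The passage from $r\,1_V$ back to $rs_v$ is then a short computation carried out in the final paragraph.

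The crucial use of the hypothesis is that aperiodicity of $\La$ makes $\mathcal{G}_\La$ an \emph{effective} (Hausdorff, ample) groupoid, so that the units with trivial isotropy are dense in $\partial\La$; this is the groupoid reformulation of the aperiodicity criterion underlying the uniqueness theorems of \cite{cla16}. Granting this, I would run the standard two-step reduction. \emph{First}, since $f\ne0$ choose $\gamma$ with $f(\gamma)\ne0$ and a compact open bisection $C\ni\gamma$; translating $f$ by $C$, that is passing to $f_1:=1_{C^{-1}}*f$, produces an element whose support contains the unit $x_0:=s(\gamma)$, with $f_1(x_0)=f(\gamma)\ne0$. \emph{Second}, I would peel off the ``off-diagonal'' part of $\operatorname{supp}(f_1)$, namely $\operatorname{supp}(f_1)\setminus\mathcal{G}_\La^{(0)}$. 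Using that $f_1$ is locally constant, that its off-diagonal support is compact, and the density of trivial-isotropy units, I would find a unit $y$ (near $x_0$, still with $f_1(y)=f_1(x_0)$) having trivial isotropy and a small clopen neighbourhood $V\subseteq\mathcal{G}_\La^{(0)}$ of $y$ meeting no off-diagonal element of $\operatorname{supp}(f_1)$ at both endpoints; then $1_V*f_1*1_V=r\,1_V$ with $r:=f_1(y)\ne0$.

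I expect this off-diagonal elimination to be the main obstacle, and the reason the lemma is phrased via Steinberg algebras rather than by importing the row-finite computations of \cite{pin13}. One must check that effectiveness---not merely that a single orbit is principal---lets one annihilate \emph{all} the finitely many off-diagonal pieces simultaneously on one common neighbourhood of $y$. Here the Hausdorffness of $\mathcal{G}_\La$ (recorded when its basis was introduced) is what keeps the step clean: the diagonal is clopen in $\operatorname{supp}(f_1)$, so separating it from the compact off-diagonal part and shrinking $V$ is legitimate.

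It remains to convert $r\,1_V$ into $rs_v$. Since the cylinder sets $Z(\la\setminus G)$ form a basis for $\partial\La$, $V$ contains a nonempty basic cylinder $Z(\la\setminus G)$, and compressing by $\pi^{-1}(1_{Z(\la\setminus G)})$ on both sides reduces $r\,1_V$ to $r\,1_{Z(\la\setminus G)}$. Under $\pi^{-1}$ this is the projection $q:=\prod_{\nu\in G}\bigl(s_\la s_{\la^*}-s_{\la\nu}s_{(\la\nu)^*}\bigr)$. Compressing by $s_{\la^*}$ and $s_\la$ is multiplicative on the corner $s_\la s_{\la^*}\kp s_\la s_{\la^*}$ and sends $q$ to $q':=\prod_{\nu\in G}\bigl(s_{s(\la)}-s_\nu s_{\nu^*}\bigr)$. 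Because $Z(\la\setminus G)\ne\emptyset$, the boundary-path definition forces $G$ to be a finite \emph{non-exhaustive} subset of $s(\la)\La$, so there is $\tau\in s(\la)\La$ with $\Lamin(\tau,\nu)=\emptyset$ for every $\nu\in G$; then (KP3) gives $s_{\tau^*}s_\nu=0$, whence a final compression by $s_{\tau^*}$ and $s_\tau$ collapses $q'$ to $s_{s(\tau)}$. Collecting all left multipliers into a single $c$ and all right multipliers into a single $d$ yields $cad=r\,s_{s(\tau)}$, and setting $v:=s(\tau)$ completes the argument.
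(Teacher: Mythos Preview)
Your proposal is correct and follows essentially the same route as the paper's proof: transport to the Steinberg algebra $A_R(\mathcal{G}_\La)$, left-multiply by the inverse of a bisection to make the function nonzero on the unit space, use effectiveness of $\mathcal{G}_\La$ to find a small open set $V$ in the unit space killing the off-diagonal support under two-sided compression, shrink $V$ to a basic cylinder $Z(\la\setminus G)$, and finally use non-exhaustiveness of $G$ to compress down to $rs_{s(\tau)}$. The paper implements your ``off-diagonal elimination'' step by quoting \cite[Lemma~3.1]{bro14} (which is exactly the statement that in an effective Hausdorff ample groupoid one can shrink an open set of units so that $VHV=\emptyset$ for a given compact $H\subseteq\mathcal{G}\setminus\mathcal{G}^{(0)}$) and handles the initial translation via \cite[Lemma~3.1]{cla15}; your sketch describes the content of those lemmas rather than citing them, but the argument is the same.
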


\begin{proof}
We use the argument of \cite[Theorem 3.2]{cla15}. Suppose $\pi:\kp\rightarrow A_R(\mathcal{G}_\La)$ is the isomorphism of \cite[Proposition 5.4]{cla16}. Then $\pi(a)$ is a nonzero element of $A_R(\mathcal{G}_\La)$ and we may apply \cite[Lemma 3.1]{cla15} to get a compact open set $B$ such that $f:=1_{B^{-1}}\ast \pi(a)$ is nonzero on $\mathcal{G}_\La^{(0)}$. Since $\mathcal{G}_\La^{(0)}$ is both open and closed, the function
$$f_0(t):=\left\{
  \begin{array}{ll}
    f(t) & t\in\mathcal{G}_\La^{(0)} \\
    0 & t\in\mathcal{G}_\La\setminus \mathcal{G}_\La^{(0)}
  \end{array}
\right.
$$
belongs to $A_R(\mathcal{G}_\La)$. By \cite[Lemma 2.2]{cla15}, we can write
$$f_0=\sum_{D\in F}a_D 1_D$$
where $F$ is a collection of mutually disjoint, nonempty compact open subsets of $\mathcal{G}_\La^{(0)}$. Note that \cite[Proposition 6.3]{cla16} yields that the groupoid $\mathcal{G}_\La$ is effective. Also, for $H:=\mathrm{supp}(f-f_0)$, we have $H\subseteq \mathcal{G}_\La\setminus \mathcal{G}_\La^{(0)}$. Fix $D_0\in F$ with $a_{D_0}\neq 0$. By \cite[Lemma 3.1]{bro14}, there is a nonempty open set $U\subseteq D_0$ such that $UHU=r^{-1}(U)\cap H\cap s^{-1}(U)=\emptyset$. Since the sets $Z(\la\setminus G)$ form a basis of compact open elements for $\mathcal{G}_\La^{(0)}$, there exists $Z(\la\setminus G)\subseteq U$, where $G$ is a finite non-exhaustive subset of $s(\la)\La$. Hence, for every $t\in \mathcal{G}_\La$, we have
$$\left(1_{Z(\la\setminus G)}\ast (f-f_0)\ast 1_{Z(\la\setminus G)}\right)(t)=1_{Z(\la\setminus G)}(r(t)) (f-f_0)(t) 1_{Z(\la\setminus G)}(s(t))=0.$$
Thus the linearity of convolution yields that
$$1_{Z(\la\setminus G)}\ast f\ast 1_{Z(\la\setminus G)}=1_{Z(\la\setminus G)}\ast f_0 \ast 1_{Z(\la\setminus G)}= 1_{Z(\la\setminus G)}.$$

On the other hand, because $G$ is not exhaustive, there exists $\mu\in s(\la)\La$ such that $\mathrm{Ext}(\mu;G)=\emptyset$. Then, using (KP3), we have
$$s_{\mu^*}s_\nu=\sum_{(\rho,\tau)\in\La^{\mathrm{min}}(\mu,\nu)} s_\rho s_{\tau^*}=0$$
for every $\nu\in G$, and hence
\begin{align*}
s_{\mu^*}\left(\prod_{\nu\in G} s_{s(\la)}-s_\nu s_{\nu^*}\right)s_\mu&=s_{\mu^*}\left(s_{s(\la)}+\sum_{F\subseteq G}(-1)^{|F|}\left(\prod_{\nu\in F} s_\nu s_{\nu^*}\right)\right)s_\mu\\
&=s_{\mu^*}s_{s(\la)}s_\mu=s_{s(\mu)}.
\end{align*}
Recall that the isomorphism $\pi$ maps $s_\la\left(\prod_{\nu\in G} (s_{s(\la)}-s_\nu s_{\nu^*})\right)s_{\la^*}$ to the element $1_{Z(\la\setminus G)}$. Therefore, with $c:=s_{(\la\mu)^*}\pi^{-1}(1_{Z(\la\setminus G)}\ast 1_{B^{-1}})$ and $d:=\pi^{-1}(1_{Z(\la\setminus G)})s_{\la\mu}$, we conclude $cad=a_{D_0}s_{s(\mu)}$.
\end{proof}

\begin{lem}\label{lem5.3}
Let $\La$ be a finitely aligned $k$-graph, and let $(\mu,\nu)$ be a generalized cycle with an entrance $\tau$. For $x:=s_{\nu^*}s_\mu$ and $x^*:=s_{\mu^*} s_\nu$, we have $x^* x=s_{s(\mu)}$ and $x^* s_\tau=s_{\tau^*} x=0$. Furthermore, if we define $p_i:=x^i s_\tau$ and $p_i^*:=s_{\tau^*} (x^*)^i$ for $i\geq 1$, then $p_i^* p_j=\delta_{i,j}s_{s(\tau)}$.
\end{lem}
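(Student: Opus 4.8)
The plan is to verify each identity directly from the Kumjian-Pask relations, using the fact that $(\mu,\nu)$ being a generalized cycle is equivalent (by the characterization stated before Lemma~\ref{lem4.3}) to $\mathrm{Ext}(\mu;\{\nu\})$ being finite and exhaustive, and that $\tau$ being an entrance means $\mathrm{MCE}(\mu,\nu\tau)=\emptyset$. First I would compute $x^*x=s_{\mu^*}s_\nu s_{\nu^*}s_\mu$. The inner factor $s_\nu s_{\nu^*}$ can be handled through Lemma~\ref{lem4.3}, which gives $s_\mu s_{\mu^*}\leq s_\nu s_{\nu^*}$, i.e. $(s_\nu s_{\nu^*})(s_\mu s_{\mu^*})=s_\mu s_{\mu^*}$. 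Multiplying on the left by $s_{\mu^*}$ and on the right by $s_\mu$, and using (KP2) together with $s_{\mu^*}s_\mu=s_{s(\mu)}$ (which follows from (KP3) since $\La^{\mathrm{min}}(\mu,\mu)=\{(s(\mu),s(\mu))\}$), collapses the expression to $s_{s(\mu)}$.

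Next I would establish $x^*s_\tau=0$, where $\tau\in s(\nu)\La$ is the entrance. Writing $x^*s_\tau=s_{\mu^*}s_\nu s_\tau=s_{\mu^*}s_{\nu\tau}$ by (KP2), I apply (KP3) to get
\begin{equation*}
s_{\mu^*}s_{\nu\tau}=\sum_{(\alpha,\beta)\in\La^{\mathrm{min}}(\mu,\nu\tau)}s_\alpha s_{\beta^*}.
\end{equation*}
Since $\tau$ is an entrance, $\mathrm{MCE}(\mu,\nu\tau)=\emptyset$, so $\La^{\mathrm{min}}(\mu,\nu\tau)=\emptyset$ and the sum is zero. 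The companion identity $s_{\tau^*}x=0$ is just the adjoint statement, obtained by applying the $*$-operation (via (KP2)) to $x^*s_\tau=0$, so it requires no separate computation.

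Finally, for the relation $p_i^*p_j=\delta_{i,j}s_{s(\tau)}$, I would expand $p_i^*p_j=s_{\tau^*}(x^*)^i x^j s_\tau$ and reduce the central block $(x^*)^i x^j$ by repeatedly applying $x^*x=s_{s(\mu)}$. Here the key is that multiplication by $x$ and $x^*$ is compatible with the vertex projection $s_{s(\mu)}=s_{r(\tau)}$, so each cancellation of an adjacent $x^*x$ pair is legitimate. When $i=j$, all $2i$ central factors collapse to $s_{s(\mu)}$, leaving $s_{\tau^*}s_{s(\mu)}s_\tau=s_{\tau^*}s_\tau=s_{s(\tau)}$. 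When $i\neq j$, say $j>i$, the central block reduces to $x^{j-i}$ after cancelling $i$ pairs, so $p_i^*p_j=s_{\tau^*}x^{j-i}s_\tau$; since $j-i\geq 1$ one factor of $x$ sits immediately to the left of $s_\tau$, and the identity $s_{\tau^*}x=0$ (equivalently its iterate) forces the whole product to vanish. The case $i>j$ is symmetric via $x^*s_\tau=0$. I expect the main obstacle to be bookkeeping the reduction of $(x^*)^i x^j$ cleanly—ensuring that the cancellation $x^*x=s_{s(\mu)}$ propagates through the word without a stray vertex projection blocking a subsequent cancellation—so I would phrase this as a short induction on $\min(i,j)$ rather than a direct manipulation.
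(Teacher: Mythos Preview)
Your proposal is correct and follows essentially the same route as the paper: invoke Lemma~\ref{lem4.3} to collapse $x^*x$ to $s_{s(\mu)}$, use (KP3) and $\mathrm{MCE}(\mu,\nu\tau)=\emptyset$ to get $x^*s_\tau=0$ (and its $*$-counterpart), then reduce $(x^*)^ix^j$ via $x^*x=s_{s(\mu)}$ and kill the residual $s_{\tau^*}x$ or $x^*s_\tau$ factor. The only cosmetic difference is that the paper handles the three cases $i>j$, $i<j$, $i=j$ by a one-line direct reduction rather than framing it as an induction on $\min(i,j)$; also note that in your $j>i$ case it is the \emph{leftmost} factor $s_{\tau^*}x$ that vanishes, not the one adjacent to $s_\tau$.
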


\begin{proof}
By Lemma \ref{lem4.3}, we have $s_\mu s_{\mu^*}\leq s_\nu s_{\nu^*}$. So,
$$x^* x=s_{s(\mu)} x^* x=(s_{\mu^*}s_\mu)(s_{\mu^*}s_\nu)(s_{\nu^*}s_\mu)=s_{\mu^*}(s_\mu s_{\mu^*})s_\mu=s_{s(\mu)}.$$
Also, (KP3) implies
$$x^* s_\tau=(s_{\mu^*} s_\nu)s_\tau =s_{\mu^*}s_{\nu\tau}=\sum_{(\alpha,\beta)\in\La^{\mathrm{min}}(\nu\tau,\mu)}s_\alpha s_{\beta^*}=0$$
because $\La^{\mathrm{min}}(\nu\tau,\mu)=\emptyset$. A same computation shows $s_{\tau^*} x=0$ either.

For the second statement, if $i>j$ we have
$$p_i^* p_j=s_{\tau^*}(x^*)^i x^j s_{\tau}=s_{\tau^*}(x^*)^{i-j}((x^*)^j x^j)s_{\tau}=s_{\tau^*}(x^*)^{i-j-1}(x^*s_{\tau})=0$$
and if $i<j$, then
$$p_i^*p_j=s_{\tau^*} x^{j-i}s_{\tau}=(s_{\tau^*} x)x^{j-i-1} s_{\tau}=0.$$
Finally, for $j=i$, we get
\[p_i^* p_i=s_{\tau^*}(x^*)^i x^i s_{\tau}=s_{\tau^*} s_{s(\mu)} s_{\tau}=s_{s(\tau)}.\]
\end{proof}

We are now ready to prove the main result of article. If $v\in\La^0$ and $(\mu,\nu)$ is a generalized cycle in $\La$, we say that \emph{$v$ is reached from $(\mu,\nu)$} whenever $v\leq s(\mu)$ (i.e., there is a path from $s(\mu)$ to $v$).

\begin{thm}\label{thm6.4}
Let $\La$ be a finitely aligned $k$-graph. If
\begin{enumerate}
  \item $R$ is a field,
  \item $\La$ is aperiodic and cofinal, and
  \item every vertex of $\La$ is reached from a generalized cycle with an entrance,
\end{enumerate}
then $\kp$ is simple and purely infinite.
\end{thm}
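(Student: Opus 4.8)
The plan is to treat simplicity and pure infiniteness separately, and for the latter to reduce everything to a statement about principal right ideals. Since $R$ is a field its only ideals are $(0)$ and $R$, so Proposition~\ref{prop4.6} (applied under hypotheses (1) and (2)) shows that the ideals of $\kp$ of the form $J\kp$ are exactly $(0)$ and $\kp$; as these exhaust all ideals, $\kp$ is simple. For pure infiniteness it suffices to prove that every principal right ideal $a\kp$ with $a\neq 0$ contains an infinite idempotent, because any nonzero right ideal $I$ contains $a\kp$ for some $0\neq a\in I$.

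First I would show that every vertex idempotent $s_v$ is infinite. Fix $v$ and use hypothesis (3) to choose a generalized cycle $(\mu,\nu)$ with an entrance $\tau$ together with a path $\lambda\in v\La s(\mu)$. Writing $x=s_{\nu^*}s_\mu$ as in Lemma~\ref{lem5.3}, that lemma gives $x^*x=s_{s(\mu)}$, whereas $xx^*=s_{\nu^*}(s_\mu s_{\mu^*})s_\nu$ is an idempotent dominated by $s_{s(\mu)}$. Because $(\mu,\nu)$ has an entrance, Lemma~\ref{lem4.3} yields $s_\mu s_{\mu^*}\neq s_\nu s_{\nu^*}$, and conjugating by $s_\nu,s_{\nu^*}$ forces $xx^*\neq s_{s(\mu)}$. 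Thus $x$ implements a Murray--von Neumann equivalence $s_{s(\mu)}\sim xx^*$ onto a proper subidempotent, so $s_{s(\mu)}$ is infinite. Finally $s_\lambda^*s_\lambda=s_{s(\mu)}$ and $s_\lambda s_\lambda^*\le s_v$, so $s_v$ dominates the idempotent $s_\lambda s_\lambda^*$, which is equivalent to the infinite idempotent $s_{s(\mu)}$; since an idempotent dominating an infinite idempotent is itself infinite, $s_v$ is infinite.

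It remains to locate an infinite idempotent inside an arbitrary $a\kp$ with $a\neq 0$. Here I would invoke Lemma~\ref{lem5.2} (which needs only aperiodicity): there are $c,d\in\kp$ with $cad=rs_v$ for some $r\neq 0$ and $v\in\La^0$, and since $R$ is a field we may rescale $c$ to assume $cad=s_v$. Put $b:=ad$, so that $cb=s_v$ and $b\kp\subseteq a\kp$. A direct computation shows $f:=(bc)^2=bs_vc$ is a nonzero idempotent lying in $b\kp\subseteq a\kp$, and the elements $\alpha:=bs_v$, $\beta:=s_vc$ satisfy $\alpha\beta=f$ and $\beta\alpha=s_v$, so that $f\sim s_v$. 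As $s_v$ is infinite by the previous step, $f$ is an infinite idempotent in $a\kp$, which completes the argument.

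The main obstacle is the passage from the two-sided reduction of Lemma~\ref{lem5.2} to a one-sided conclusion: the lemma only produces $s_v$ inside $\kp a\kp$, whereas pure infiniteness demands an infinite idempotent inside the right ideal $a\kp$ itself. The corner idempotent $f=bs_vc$ and its equivalence to $s_v$ are exactly the device that bridges this gap, and checking that $f$ is nonzero (for instance via $s_v=s_v^3=(cb)^3=c(bc)^2b$) and genuinely equivalent to $s_v$ are the calculations to be carried out carefully. The remaining delicate points are the strict inequality $xx^*\lneq s_{s(\mu)}$ extracted from the entrance hypothesis through Lemma~\ref{lem4.3}, and the elementary but essential facts that infiniteness is preserved under Murray--von Neumann equivalence and passes upward under domination.
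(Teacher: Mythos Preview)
Your argument is correct and takes a genuinely different route from the paper's. The paper appeals to the characterisation from \cite[Proposition~10]{abr06}: it checks that $\kp$ is not a division ring and that for any nonzero $a,b$ one can write $cad=b$. To achieve the latter it uses Lemma~\ref{lem5.2} to reach $rs_v$, pushes this to $rs_{s(\mu)}$, invokes simplicity to write $b=\sum_i c_i(rs_{s(\tau)})d_i$, and then exploits the full strength of Lemma~\ref{lem5.3} --- the orthogonal family $p_i=x^i s_\tau$ with $p_i^*p_j=\delta_{ij}s_{s(\tau)}$ --- to assemble $c''$ and $d''$ collapsing the sum. By contrast, you verify the definition directly: you use only the first conclusion of Lemma~\ref{lem5.3} ($x^*x=s_{s(\mu)}$) together with the strict inequality in Lemma~\ref{lem4.3} to exhibit $s_{s(\mu)}$ as Murray--von~Neumann equivalent to a proper subidempotent, push infiniteness up to $s_v$, and then use the idempotent $f=(bc)^2\sim s_v$ to land inside $a\kp$. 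Your approach is lighter on the $k$-graph combinatorics (the $p_i$ computation is never needed) but trades this for some elementary ring-theoretic bookkeeping: that equivalence preserves infiniteness, that infiniteness passes upward to dominating idempotents, and that $\kp$ has local units so $a\in a\kp$. These are all routine, so both arguments are complete; the paper's route has the virtue of yielding the explicit two-sided reachability $b\in\kp a\kp$ along the way.
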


\begin{proof}
We use the equivalence (i) $\Longleftrightarrow$ (v) of \cite[Proposition 10]{abr06}: $\kp$ is purely infinite simple if and only if $\kp$ is not a division ring and for every nonzero elements $a,b\in\kp$, there exist $c,d\in\kp$ such that $cad=b$.

Suppose that the three conditions hold. If $|\La^0|\geq 2$ and $v\neq w\in \La^0$, then $s_v s_w=0$ and $s_v,s_w$ are zero divisors. If $|\La^0|=1$, then there exist distinct $\mu,\nu\in \La^{e_i}$ for some $1\leq i\leq k$, because in the otherwise $\kp$ is isomorphic to a nonsimple Laurent polynomial ring $R[x_1,x_1^{-1},\ldots, x_l,x_l^{-1}]$. So, we have $s_{\nu^*}s_\mu=0$ by (KP3) and $s_\mu$ is a zero divisor in $\kp$. Thus, in each case, $\kp$ is not a division ring.

Now fix nonzero elements $a,b\in\kp$. By Lemma \ref{lem5.2}, there are $c',d'\in \kp$ such that $c'ad'=rs_v$ for some $v\in \La^0$ and $r\in R\setminus\{0\}$. Assume $(\mu,\nu)$ is a generalized cycle with an entrance $\tau$ which connects to $v$ by a path $\la\in v\La s(\mu)$. Note that we have
$$s_{\la^*}(c'a d')s_\la=s_{\la^*}(rs_v)s_\la=rs_{s(\la)}=rs_{s(\mu)}.$$
Since $\kp$ is simple by Proposition \ref{prop4.6} (or \cite[Theorem 9.4]{cla16}), the ideal generated by $rs_{s(\tau)}$ is equal to $\kp$. So, there exist $\{c_i,d_i\in\kp:1\leq i\leq l\}$ such that $\sum_{i=1}^l c_i(rs_{s(\tau)})d_i=b$. As Lemma \ref{lem5.3}, set $p_i:=(s_{\nu^*}s_\mu)^i s_\tau$ and $p_i^*:=s_{\tau^*}(s_{\mu^*} s_\nu)^i$ for $i\geq 1$. If we define $c''=\sum_{i=1}^lc_ip_i^*$ and $d''=\sum_{j=1}^lp_j d_j$, Lemma \ref{lem5.3} implies that
\begin{align*}
c''(rs_{s(\mu)})d''&=\left(\sum_{i=1}^lc_ip_i^*\right) rs_{s(\mu)} \left(\sum_{j=1}^lp_j d_j\right)\\
&=\sum_{i,j=1}^l r c_i(p_i^*p_j)d_j\\
&=\sum_{i=1}^l rc_i s_{s(\tau)} d_i\\
&=b.
\end{align*}
Therefore, by setting $c:=c''s_{\la^*} c'$ and $d:=d's_\la d''$, we get $cad=b$. Now \cite[Proposition 10]{abr06} follows the result.
\end{proof}


\section{A dichotomy principle for simple Kumjian-Pask algebras}\label{sec7}

Despite simple Leavitt path algebras, there exists a simple Kumjian-Pask algebra which is neither purely infinite nor locally matricial \cite[Theorem 7.10]{pin13}. In this section, we consider finitely aligned $k$-graphs $\La$ such that every vertex of $\La$ can be reached only from finitely many vertices. Note that every $k$-graph with finitely many vertices satisfies this condition. In this case, Theorem \ref{thm7.2} below gives a facile necessary and sufficient criterion so that a Kumjian-pask algebra $\kp$ is purely infinite simple. We see also in the case that a simple Kumjian-Pask algebra is either locally matricial or purely infinite.

In order to prove Theorem \ref{thm7.2}, we need the following two lemmas.

\begin{lem}\label{lem7.1}
Let $\La$ be a finitely aligned $k$-graph with the property that the sets $\La^0_{\geq v}:=\{w\in\La^0: v\leq w\}$ are finite for all $v\in \La^0$. Then every cycle of $\La$ is reached from an initial cycle. In particular, if $\La$ is also aperiodic, then every cycle in $\La$ is reached from an initial cycle with an entrance.
\end{lem}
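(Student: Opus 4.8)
The plan is to prove the first assertion by reducing to a higher-rank graph with finitely many vertices and locating an initial cycle there, after which the ``in particular'' clause is immediate from Corollary \ref{cor4.5}(2). First I would fix a cycle $c$, set $v:=r(c)$, and let $W:=\La^0_{\geq v}$, which is finite by hypothesis. The key structural observation is that $W$ is closed under predecessors: if $w\in W$ and $w'\La w\neq\emptyset$ then $w'\in W$. Consequently $\La_W:=\{\lambda\in\La:r(\lambda)\in W\}$ is closed under composition and under the unique factorisation (each factor of a path with range in $W$ again has range in $W$), so it is a finitely aligned $k$-graph with finite vertex set $W$, and it contains $c$. Because $W$ is predecessor-closed we have $u\La^{e_i}=u\La_W^{e_i}$ for every $u\in W$, so a cycle is initial in $\La_W$ iff it is initial in $\La$; and every cycle of $\La_W$ is based at some $u\in W$, that is $v\leq u$, which is exactly the assertion that it reaches $c$. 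Thus it suffices to prove the \emph{finite case}: a finitely aligned $k$-graph with finitely many vertices that contains a cycle also contains an initial cycle.

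In the finite case I would argue by maximality of support. Among all cycles of $\La_W$ choose one, say $\gamma$ based at $u$, whose degree support $S:=\{i:d(\gamma)_i>0\}$ is maximal; this is possible since there are only finitely many subsets of $\{1,\dots,k\}$ and at least the cycle $c$ is available. I claim $\gamma$ is initial. If not, there is a direction $i\notin S$ with $u\La^{e_i}\neq\emptyset$; pick an edge $f$ of degree $e_i$ into $u$, with source $u'$. The plan is to slide $f$ backwards around $\gamma$: writing $D:=d(\gamma)$ and $e^{(0)}:=f$, repeated application of the unique factorisation to $\gamma e^{(n)}$ (of degree $D+e_i$, from $w_n$ to $u$, with $w_0:=u'$) produces degree-$D$ paths $h_n\colon w_n\to w_{n+1}$ together with degree-$e_i$ edges $e^{(n+1)}\colon w_{n+1}\to u$. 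Since each $w_n$ reaches $u\in W$, all $w_n$ lie in the finite set $W$, so $w_m=w_{m'}$ for some $m<m'$, and $h_{m'-1}\cdots h_m$ is a cycle of degree $(m'-m)D$ at $w_m$.

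This is where the real work lies, and it is the \textbf{main obstacle}. Using the edges $e^{(n)}$ of the transverse degree $e_i$ together with the recurrence just produced and the commuting squares supplied by the factorisation property, I would assemble a cycle whose support strictly contains $S$, contradicting the maximality of $\gamma$; iterating this correction whenever the current maximal-support cycle fails to be initial must terminate, by finiteness of $W$ together with the bound on supports, at an initial cycle. The difficulty is precisely the combination step: the missing direction $e_i$ is transverse to $\gamma$, and $f$ may enter $u$ from a vertex $u'$ that $u$ cannot reach, so one cannot merely concatenate $\gamma$ with a return path. The resolution I anticipate is to exploit that $W$ is finite to force the transverse edges to close up — the repetitions among the $w_n$ and among the $e^{(n)}$ must yield a genuine cycle realizing direction $i$ at a vertex communicating with the support-$S$ cycle — so that the two can be concatenated into a cycle of strictly larger support. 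Making this closing-up precise, and verifying termination, is the technical heart of the argument.

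Finally, for the closing sentence, suppose in addition that $\La$ is aperiodic. The initial cycle $\mu$ produced above reaches $c$, and by Corollary \ref{cor4.5}(2) every initial cycle of an aperiodic $k$-graph has an entrance. Hence $\mu$ is an initial cycle with an entrance that reaches $c$, which is exactly what the ``in particular'' clause asserts.
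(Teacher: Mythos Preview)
Your reduction to a finite-vertex sub-$k$-graph and the choice of a cycle $\gamma$ at $u$ with maximal support $S$ are exactly right, and the ``in particular'' clause via Corollary~\ref{cor4.5}(2) is fine. The gap is the claim that $\gamma$ itself must be initial, to be proved by producing a cycle of strictly larger support. That conclusion is false in general, so no such contradiction is available. Take the $2$-graph with vertices $u,w$, blue loops $b_u\in u\La^{e_1}$ and $b_w\in w\La^{e_1}$, and a single red edge $f\in u\La^{e_2}$ with $s(f)=w$, subject to $b_u f=f b_w$. Then $W=\La^0_{\geq u}=\{u,w\}$, every cycle has support $\{1\}$, and $\gamma=b_u$ is a maximal-support cycle that is \emph{not} initial (since $f\in u\La^{e_2}$). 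Your sliding procedure applied to $\gamma=b_u$ and $e^{(0)}=f$ returns $h_0=b_w$ at $w_0=w_1=w$, still of support $\{1\}$; there is simply no cycle of larger support to manufacture, so the proposed contradiction cannot materialize, and your termination claim (``finiteness of $W$ together with the bound on supports'') has no quantity that strictly decreases.

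The paper uses maximality of $S$ differently: it does \emph{not} try to show $\rho$ is initial. Instead, maximality is used to prove that any path $\lambda\in r(\rho)\La$ with $d(\lambda)\wedge d(\rho)=0$ visits pairwise distinct vertices---for if $\lambda(m)=\lambda(n)$ with $m<n$, a pigeonhole on the diagonal positions of $\rho^{|H|}\,\lambda(0,m)\,(\lambda(m,n))^{|H|}$ yields a cycle whose support strictly contains $S$. Hence the ``transverse'' distance $b(\lambda):=\sum_{i\notin S}d(\lambda)_i$ is bounded; choosing $\lambda$ with $b(\lambda)$ maximal forces $s(\lambda)\La^{e_i}=\emptyset$ for all $i\notin S$, and a second pigeonhole on the $d(\rho)$-slices of the degree-$|H|d(\rho)$ tail of $\rho^{|H|}\lambda$ then produces a cycle of degree in $\mathbb{N}S$ based at that dead-end vertex---which is therefore initial. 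In short, the initial cycle lives at a vertex reached from $r(\rho)$ by a maximal transverse path, not at $r(\rho)$ itself; your argument needs this relocation step rather than a contradiction to maximality.
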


\begin{proof}
Fix a cycle $\mu\in \La$ and let us denote the hereditary set $H:=\La^0_{\geq r(\mu)}=\{w\in \La^0: r(\mu)\leq w\}$. We consider the $k$-subgraph $H\La=(H,r^{-1}(H),r,s,d)$ of $\La$ that contains finitely many vertices. For every cycle $\la$, define $c(\la):=|\{i:d(\la)_i\neq 0\}|$ and choose a cycle $\rho\in H\La$ such that $c(\rho)$ is maximum among those in $H\La$.

We claim that if $\la\in r(\rho)\La$ with $d(\la)\wedge d(\rho)=0$, we then have $\la(m)\neq \la(n)$ for $m<n\leq d(\la)$. For this, assume $\la(m)=\la(n)$ with $m\neq n$. Set $t:=|H|$, $\tau:=\rho^t \la(0,m) (\la(m,n))^t$, and $p:=d(\rho)+(n-m)$. Then the vertices $\tau(0), \tau(p), \ldots, \tau(tp)$ are not distinct (because their number is more than $t=|H|$); so there exists $r<s\leq t$ such that $\tau(rp)=\tau(sp)$. Since for the cycle $\tau(rp,sp)$, we have
$$c(\tau(rp,sp))=c(\rho)+c(\la(m,n)) > c(\rho),$$
this contradicts our choice of $\rho$. Hence, the claim holds.

Now for each $\la\in r(\mu)\La$, let us define the nonnegative integer $b(\la):=\sum_{e_i\wedge d(\rho)=0}d(\la)_i$. As $t=|H|<\infty$, the above claim implies that $\max\{b(\la):\la\in r(\mu)\La\}$ is finite; denote it by $N$. Note that if $\la\in r(\mu)\La$ with $b(\la)=N$, then $s(\la)\La^{e_i}=\emptyset$ for every $e_i\wedge d(\rho)=0$. Select some $\la\in r(\mu)\La$ with $d(\la)=N$ and factorise $\rho^t \la=\alpha\beta$ with $d(\alpha)=d(\la)$ and $d(\beta)=td(\rho)$. Again, since $t=|H|$, the vertices $\beta(0),\beta(d(\rho)),\ldots,\beta(td(\rho))$ are not distinct. So, there exist $r<s\leq t$ such that $\beta(rd(\rho))=\beta(sd(\rho))$. As $r(\beta)\La^{e_i}=s(\alpha)\La^{e_i}=\emptyset$ whenever $e_i\wedge d(\rho)=0$, we see that $\beta(rd(\rho),sd(\rho))$ is an initial cycle. Therefore, $\mu$ is reached from the initial cycle $\beta(rd(\rho),sd(\rho))$, as desired.

Furthermore, if $\La$ is aperiodic, Corollary \ref{cor4.5}(2) implies that every initial cycle has an entrance which follows the second statement.
\end{proof}

\begin{lem}\label{lem6.2}
Let $\Lambda$ be a finitely aligned $k$-graph and $R$ be a unital commutative ring. If $v\in \Lambda^0$ is a vertex such that $v\Lambda=\{v\}$, then $I_v\cong \mathbb{M}_{|\Lambda v|}(R)$ as $R$-algebras, where $I_v$ is the ideal of $\kp$ generated by $s_v$.
\end{lem}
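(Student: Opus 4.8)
The plan is to exhibit an explicit system of matrix units inside $I_v$ indexed by $\La v\times\La v$, and then to verify through the Steinberg algebra model that these elements are $R$-linearly independent; this yields the asserted isomorphism. The hypothesis $v\La=\{v\}$ is the engine that collapses all the (KP3) sums to the trivial ones.

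First I would note that the singleton $H=\{v\}$ is hereditary: since $v\La=\{v\}$, the only $w$ with $v\La w\neq\emptyset$ is $w=v$ itself. Hence Lemma \ref{lem3.6} applies and gives
\[
I_v=I_{\{v\}}=\mathrm{span}\{s_\mu s_{\nu^*}:\mu,\nu\in\La,\ s(\mu)=s(\nu)=v\}=\mathrm{span}\{s_\mu s_{\nu^*}:\mu,\nu\in\La v\}.
\]
Next I would establish the matrix-unit relations for $e_{\mu,\nu}:=s_\mu s_{\nu^*}$ with $\mu,\nu\in\La v$. For any $(\alpha,\beta)\in\Lamin(\nu,\rho)$ one has $r(\alpha)=s(\nu)=v$ and $r(\beta)=s(\rho)=v$, so $\alpha,\beta\in v\La=\{v\}$, forcing $\alpha=\beta=v$ and hence $\nu=\rho$. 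Thus (KP3) collapses to $s_{\nu^*}s_\rho=\delta_{\nu,\rho}\,s_v$, and using $s_\mu s_v=s_\mu$ this gives $e_{\mu,\nu}e_{\rho,\tau}=\delta_{\nu,\rho}\,e_{\mu,\tau}$. Combined with the spanning statement above, the assignment $E_{\mu,\nu}\mapsto e_{\mu,\nu}$ (where $E_{\mu,\nu}$ are the standard matrix units) defines a surjective $R$-algebra homomorphism $\Phi:\mathbb{M}_{|\La v|}(R)\to I_v$.

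The substantive work is the injectivity of $\Phi$, equivalently the $R$-linear independence of $\{e_{\mu,\nu}\}$. Here I would pass to the Steinberg algebra via the isomorphism $\pi:\kp\to A_R(\mathcal{G}_\La)$ of \cite[Proposition 5.4]{cla16}. The crucial observation is that $v$ is a source in the boundary-path sense: because $v\La^{e_i}=\emptyset$ for every $i$, a boundary path of range $v$ can have no nonzero degree, so $v\partial\La=\{v\}$, the trivial boundary path. Consequently $Z(\mu)=\{\mu\}$ and $Z(\nu)=\{\nu\}$ for $\mu,\nu\in\La v$, whence $Z(\mu*_s\nu)=\{(\mu,d(\mu)-d(\nu),\nu)\}$ is a single point of $\mathcal{G}_\La$ and $\pi(e_{\mu,\nu})=1_{Z(\mu*_s\nu)}$. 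Distinct pairs $(\mu,\nu)$ give distinct points, so the indicator functions $\{1_{Z(\mu*_s\nu)}\}$ are $R$-linearly independent in $A_R(\mathcal{G}_\La)$; pulling back along $\pi$ shows $\{e_{\mu,\nu}\}$ are linearly independent, and $\Phi$ is injective.

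The step I expect to be the main obstacle is the identification $v\partial\La=\{v\}$ together with the ensuing computation that each $Z(\mu*_s\nu)$ is a singleton; once this is in hand, linear independence is immediate and everything else reduces to (KP1)--(KP3) and Lemma \ref{lem3.6}. One caveat to record in the write-up: when $\La v$ is infinite, $\mathbb{M}_{|\La v|}(R)$ must be read as the algebra of finitely supported $\La v\times\La v$ matrices, which is exactly what matches the finite-span description of $I_v$ above.
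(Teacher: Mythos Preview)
Your argument is correct and follows the paper's approach for the two main steps: showing that $\{v\}$ is hereditary so that Lemma~\ref{lem3.6} gives $I_v=\mathrm{span}\{s_\mu s_{\nu^*}:\mu,\nu\in\La v\}$, and using $v\La=\{v\}$ to collapse the (KP3) sum and obtain the matrix-unit relations $e_{\mu,\nu}e_{\rho,\tau}=\delta_{\nu,\rho}e_{\mu,\tau}$. The paper does exactly this.

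Where you diverge is in justifying $R$-linear independence of the $e_{\mu,\nu}$. The paper simply asserts that the matrix units generate a copy of $\mathbb{M}_{|\La v|}(R)$, implicitly relying on the standard trick: if $\sum_{\mu,\nu} r_{\mu,\nu}e_{\mu,\nu}=0$, then for fixed $(\mu_0,\nu_0)$ one multiplies on the left by $e_{v,\mu_0}$ and on the right by $e_{\nu_0,v}$ to obtain $r_{\mu_0,\nu_0}s_v=0$, whence $r_{\mu_0,\nu_0}=0$ by \cite[Theorem~3.7(b)]{cla16}. Your route through the Steinberg algebra---identifying $v\partial\La=\{v\}$ and observing that each $Z(\mu*_s\nu)$ is a singleton---is correct and conceptually pleasant, but it is more machinery than the situation demands. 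The elementary matrix-unit manipulation already settles injectivity once you know $rs_v\neq 0$ for $r\neq 0$, and this is worth knowing since it avoids importing the groupoid model for what is really a linear-algebra fact.
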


\begin{proof}
For each $(\mu,\nu)\in \Lambda v\times \Lambda v$, define $\theta_{\mu,\nu}:=s_\mu s_{\nu^*}$. Since $v$ receives no nontrivial paths, apply (KP2) and (KP3) to get
\begin{align*}
\theta_{\mu,\nu} \theta_{\lambda,\gamma}&=s_\mu (s_{\nu^*}s_\lambda) s_{\gamma^*}\\
&=s_\mu \left( \sum_{(\alpha,\beta)\in\La^{\mathrm{min}}(\nu,\la)}s_\alpha s_{\beta^*}\right) s_{\gamma^*}\\
&=s_\mu (\delta_{\nu,\lambda} s_v) s_{\gamma^*}\\
&= \delta_{\nu,\lambda} \theta_{\mu,\gamma}
\end{align*}
for every $(\mu,\nu),(\lambda,\gamma)\in \Lambda v\times \Lambda v$. Hence, $\{\theta_{\mu,\nu}:(\mu,\nu)\in \Lambda v\times \Lambda v\}$ forms a set of matrix units indexed by $\La v\times \La v$ which generates a subalgebra in $I_v$ isomorphic to $\mathbb{M}_{|\Lambda v|}(R)$. On the other hand, since $v\Lambda=\{v\}$ is a hereditary subset of $\Lambda^0$, Lemma \ref{lem3.6} implies that the elements $\theta_{\mu,\nu}=s_\mu s_{\nu^*}$ span $I_v$ either. Consequently, $I_v$ is isomorphic to $\mathbb{M}_{|\Lambda v|}(R)$.
\end{proof}

\begin{thm}\label{thm7.2}
Let $\La$ be a finitely aligned $k$-graph such that the sets $\La^0_{\geq v}:=\{w\in\La^0: v\leq w\}$ are finite for all $v\in \La^0$. Then $\kp$ is purely infinite simple if and only if $R$ is a field, and $\La$ is both cofinal and aperiodic containing a cycle.
\end{thm}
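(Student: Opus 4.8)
The plan is to prove Theorem \ref{thm7.2} as a biconditional, treating each direction separately and drawing on the machinery already assembled. The key observation is that under the finiteness hypothesis on $\La^0_{\geq v}$, the generalized-cycle condition of Theorem \ref{thm6.4} collapses to the much more transparent requirement that $\La$ \emph{contain an (ordinary) cycle}, via Lemma \ref{lem7.1}. So the forward-looking strategy is to reduce both implications to results already in hand.

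For the ($\Leftarrow$) direction, suppose $R$ is a field and $\La$ is cofinal, aperiodic, and contains a cycle $\mu$. First I would invoke Lemma \ref{lem7.1}: since the sets $\La^0_{\geq v}$ are finite, the cycle $\mu$ is reached from an initial cycle, and because $\La$ is aperiodic, that initial cycle has an entrance. This produces a generalized cycle with an entrance. The next step is to upgrade ``some cycle is reached from a generalized cycle with an entrance'' to ``\emph{every} vertex is reached from such a generalized cycle.'' Here cofinality is the workhorse: I would argue that cofinality of $\La$, combined with Theorem \ref{thm3.8} (the only saturated hereditary sets are $\emptyset$ and $\La^0$), forces the hereditary closure of $\{s(\mu)\}$ to be all of $\La^0$, so every vertex $v$ satisfies $v\leq s(\mu)$ and hence is reached from the generalized cycle with an entrance. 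With all three hypotheses of Theorem \ref{thm6.4} verified, that theorem delivers the pure infiniteness and simplicity directly.

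For the ($\Rightarrow$) direction, assume $\kp$ is purely infinite simple. Simplicity, via Proposition \ref{prop4.6} together with \cite[Theorem 9.3]{cla16}, immediately gives that $\La$ is both cofinal and aperiodic, and that $R$ is a field (a purely infinite ring has no nonzero central idempotents spanning a field quotient of the wrong form, but more directly: if $R$ were not a field, $\kp$ would contain a noninvertible nonzero element $rs_v$ generating a proper ideal, contradicting simplicity via Lemma \ref{lem3.7}(1)). The real content is showing $\La$ \emph{must} contain a cycle. I would prove the contrapositive: if $\La$ has no cycle, then under the finiteness hypothesis there is a vertex $v$ with $v\La=\{v\}$ (a minimal vertex receiving no nontrivial paths, which exists by a standard descent argument using finiteness of $\La^0_{\geq v}$ to prevent infinite descending chains). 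Then Lemma \ref{lem6.2} identifies the ideal $I_v$ with a matrix algebra $\mathbb{M}_{|\La v|}(R)$ over the field $R$, which is not purely infinite (matrix algebras over a field are simple but finite-dimensional, hence every idempotent is finite). This contradicts pure infiniteness of $\kp$, since a purely infinite simple ring cannot contain a nonzero finite idempotent in a nonzero ideal.

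\textbf{The main obstacle} I anticipate is the acyclic-to-terminal-vertex step in the reverse direction: extracting a vertex $v$ with $v\La=\{v\}$ from the mere absence of cycles requires the finiteness of $\La^0_{\geq v}$ to rule out infinite forward paths without repetition, and one must be careful that ``no cycle'' in a $k$-graph is interpreted correctly (no $\mu\in\La^{\neq 0}$ with $s(\mu)=r(\mu)$) and that the descent genuinely terminates at a source in every coordinate direction, i.e.\ that $v\La^{e_i}=\emptyset$ for all $i$. A secondary subtlety is confirming that $R$ being a field is forced rather than merely assumed; I expect this to follow cleanly from simplicity and Lemma \ref{lem3.7}(1), but it should be stated explicitly rather than left implicit.
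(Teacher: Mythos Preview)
Your $(\Rightarrow)$ direction is fine and matches the paper's argument closely. The gap is in your $(\Leftarrow)$ direction, specifically the step where you claim cofinality ``forces the hereditary closure of $\{s(\mu)\}$ to be all of $\La^0$, so every vertex $v$ satisfies $v\leq s(\mu)$.'' This fails on two counts. First, Theorem~\ref{thm3.8} only tells you that the \emph{saturated} hereditary closure is $\La^0$; the hereditary closure itself is $\{w:s(\mu)\leq w\}=\La^0_{\geq s(\mu)}$, which is finite by hypothesis and typically proper. Second, and more seriously, you have the direction reversed: membership in the hereditary closure of $\{s(\mu)\}$ means $s(\mu)\leq w$ (a path from $w$ to $s(\mu)$), whereas ``$v$ is reached from the cycle'' means $v\leq s(\mu)$ (a path from $s(\mu)$ to $v$). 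Even passing to the saturation does not recover the desired inequality, since being in $\overline{H}$ only says there is a finite exhaustive set with sources in $H$, not that there is a path from $s(\mu)$.

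The paper handles this with a genuinely different argument. For a fixed $v$, it takes the hereditary set $H=\{w:v\leq w\}$ (this one \emph{is} the right direction), uses cofinality to get $r(\mu)\in\overline{H}$, and then extracts a finite exhaustive set $E\in r(\mu)\mathrm{FE}(\La)$ with $s(E)\subseteq H$. A pigeonhole argument on the finite set $H$ then produces a path of total degree at least $|H|$ lying entirely in $H$, which must contain a repeated vertex and hence a cycle; that cycle, being in $H$, reaches $v$. Note that different vertices may be reached from \emph{different} cycles --- there is no claim that the original $\mu$ reaches everything. Only after this does Lemma~\ref{lem7.1} upgrade each such cycle to an initial cycle with an entrance. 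An alternative route, closer in spirit to what you intended, would be: apply Lemma~\ref{lem7.1} first to obtain an initial cycle $\rho$, observe that $\rho^\infty\in\partial\La$ (as in the proof of Corollary~\ref{cor4.5}(2)), and then use the \emph{definition} of cofinality directly on the boundary path $\rho^\infty$ to get $v\leq \rho^\infty(n)\leq r(\rho)$ for every $v$.
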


\begin{proof}
($\Longrightarrow$): Assume that $\kp$ is simple and purely infinite. Then by Proposition \ref{prop4.6} (or \cite[Theorem 9.4]{cla16}), $R$ is a field and $\La$ is aperiodic and cofinal. By way of contradiction, suppose that $\La$ has no cycles. Since $\La^0_{\geq v}=\{w\in \La^0:v\leq w\}$ is finite for every $v\in \La^0$, there is $w\in \La^0$ such that $w\La^0=\{w\}$. If $I_w$ is the ideal of $\kp$ generated by $s_w$, the simplicity yields that $\kp=I_w$. But we have $I_w\cong M_{|\La w|}(R)$ by Lemma \ref{lem6.2} which is not purely infinite. This contradicts the hypothesis.

($\Longleftarrow$): Conversely, assume that $R$ is a field and $\La$ is aperiodic and cofinal containing a cycle. We first show that every vertex of $\La^0$ is reached from a cycle. Fixed $v\in \La^0$, consider the hereditary subset $H:=\{w\in \La^0: v\leq w\}$. Using Theorem \ref{thm3.8}, the cofinality yields $\overline{H}=\La^0$. Take some cycle $\mu$ in $\La$. Since $r(\mu)\in \overline{H}$, the saturation property of $\overline{H}$ gives $E\in r(\mu)\mathrm{FE}(\La)$ such that $s(E)\subseteq H$. For each $\nu\in \La$ denote $|\nu|:=d(\nu)_1+\ldots+ d(\nu)_k$ and select $\la\in E$ such that $|\la|=\max\{|\nu|:\nu\in E\}$. Since $E$ is exhaustive, the set $\mathrm{Ext}(\mu^t\la;E)$ is nonempty for $t:=|H|$ the cardinality of $H$. So, there exist $\beta\in \mathrm{Ext}(\mu^t\la;E)$, $\nu\in E$ and $\alpha\in s(\nu)\La$ such that $\mu^t\la\beta=\nu\alpha$. In particular, $r(\alpha)=s(\nu)\in H$ and we have
$$|\alpha|=|\mu^t|+|\beta|+(|\la|-|\nu|)\geq |\mu^t|+|\beta|\geq |H|.$$
Therefore, there is a cycle as a submorphism of $\alpha$ that connects to $v$.

Now since $\La$ is aperiodic, combine the above argument with Lemma \ref{lem7.1} to see that every vertex of $\La$ is reached from an initial cycle with an entrance. Hence, by Theorem \ref{thm6.4}, $\kp$ is purely infinite and simple.
\end{proof}

Using Theorem \ref{thm7.2}, we obtain the analogue of \cite[Corollary 5.7]{eva12} for simple Kumjian-Pask algebras.

\begin{cor}[A dichotomy principle for simple Kumjian-Pask algebras]
Let $\La$ be a finitely aligned $k$-graph such that $\La^0_{\geq v}$ is finite for every $v\in \La^0$. Suppose also that $\kp$ is simple. If $\La$ has no cycles, then $\kp$ is locally matricial; otherwise, $\kp$ is purely infinite simple.
\end{cor}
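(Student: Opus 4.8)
The plan is to settle the corollary by reducing each of its two alternatives to machinery already established, splitting on whether or not $\La$ contains a cycle. Before splitting I would first record that simplicity of $\kp$ forces, via Proposition \ref{prop4.6} together with the known characterisation of simplicity \cite[Theorem 9.4]{cla16}, that $R$ is a field and that $\La$ is both cofinal and aperiodic. These are precisely the common hypotheses of Theorem \ref{thm7.2}, so the corollary is essentially a repackaging of that theorem together with Lemma \ref{lem6.2}.

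In the first branch, assume $\La$ contains a cycle. Then $R$ is a field and $\La$ is cofinal, aperiodic, and contains a cycle, so the ($\Longleftarrow$) implication of Theorem \ref{thm7.2} applies verbatim and yields that $\kp$ is purely infinite simple. Nothing further is required in this case.

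In the second branch, assume $\La$ has no cycles. Here I would reuse the source-extraction step from the forward direction of Theorem \ref{thm7.2}. Fixing any $v\in\La^0$, the set $\La^0_{\geq v}$ is finite, and I claim that a $\leq$-maximal element $w$ of $\La^0_{\geq v}$ satisfies $w\La=\{w\}$: a nontrivial $\mu$ with $r(\mu)=w$ would give $s(\mu)\in\La^0_{\geq v}$ with $w\leq s(\mu)$ and $w\neq s(\mu)$ (strictness from acyclicity), contradicting maximality. Thus $w$ is a source, simplicity forces the ideal $I_w$ generated by $s_w$ to equal $\kp$, and Lemma \ref{lem6.2} identifies $\kp=I_w\cong \mathbb{M}_{|\La w|}(R)$.

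It then remains only to observe that a full matrix algebra over the field $R$ is locally matricial: if $\La w$ is finite then $\mathbb{M}_{|\La w|}(R)$ is itself matricial, while if $\La w$ is infinite it is the directed union of its finite corner subalgebras $\mathbb{M}_n(R)$, hence a direct limit of matricial algebras. Either way $\kp$ is locally matricial, completing the dichotomy. I do not anticipate a genuine obstacle in this corollary; the only points needing a little care are recognising that the maximal-ancestor argument produces a \emph{global} source $w$ (not merely one maximal among the ancestors of $v$), and the passage from a matrix algebra over a possibly infinite index set to local matriciality.
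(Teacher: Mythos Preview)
Your proposal is correct and follows essentially the same approach as the paper: both invoke Proposition~\ref{prop4.6} (and the simplicity characterisation) to obtain that $R$ is a field and $\La$ is cofinal and aperiodic, then split on the presence of a cycle, applying Theorem~\ref{thm7.2} in one branch and the source-plus-Lemma~\ref{lem6.2} argument in the other. Your write-up is in fact slightly more careful than the paper's, which leaves the passage from $\mathbb{M}_{|\La w|}(R)$ to ``locally matricial'' implicit and does not spell out why the $\leq$-maximal vertex is a global source.
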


\begin{proof}
Recall from Proposition \ref{prop4.6} that $\La$ is aperiodic and cofinal. If $\La$ has no cycles, then by Lemma \ref{lem6.2} and the first paragraph of above proof we have $\kp\cong \mathbb{M}_{|\La v|}(R)$ for some $v\in\La^0$. If $\La$ contains a cycle, Theorem \ref{thm7.2} implies that $\kp$ is purely infinite.
\end{proof}

\begin{rem}
If $\La^0$ is finite, the set $\La^0_{\geq v}=\{w\in\La^0:v\leq w\}$ is finite for all $v\in \La^0$. Since a Kumjian-Pask algebra $\kp$ is unital if and only if $\La^0$ is finite, the above corollary covers all unital simple Kumjian-Pask algebras.
\end{rem}


\end{document}